\title{The ideal of maximal flags of a poset}
\author{Amin Nematbakhsh}
\date{\today}
\email{nematbakhsh.amin@gmail.com}
\subjclass[2010]{13D02, 05E40, 06A11 (primary), 05C75 (secondary)} 
\newtheorem{theorem}{Theorem}[section]
\newtheorem{proposition}[theorem]{Proposition}
\newtheorem{lemma}[theorem]{Lemma}
\newtheorem{claim}[]{Claim}
\theoremstyle{definition}
\newtheorem{definition}[theorem]{Definition}
\newtheorem{remark}[theorem]{Remark}
\newtheorem{example}[theorem]{Example}
\newtheorem{construction}[theorem]{Construction}
\newtheorem{lemma*}[]{Lemma}
\newcommand{\kk}{\Bbbk}
\newcommand{\NN}{\mathbb{N}}
\newcommand{\bfa}{\mathbf{a}}
\newcommand{\bfb}{\mathbf{b}}
\newcommand{\bfc}{\mathbf{c}}
\newcommand{\mfp}{\mathfrak{p}}
\newcommand{\mcF}{\mathcal{F}}
\newcommand{\mcI}{\mathcal{I}}
\newcommand{\mcJ}{\mathcal{J}}
\newcommand{\mcK}{\mathcal{K}}
\newcommand{\mcM}{\mathcal{M}}
\newcommand{\ass}{\operatorname{ass}}
\newcommand{\height}{\operatorname{height}}
\newcommand{\Hom}{\operatorname{Hom}}
\newcommand{\Tor}{\operatorname{Tor}}
\begin{document}

\begin{abstract}
We study algebraic and homological properties of facet ideals of order complexes of posets which we call ideals of maximal flags of posets or simply flag ideals.
We characterize the unmixed and Cohen-Macaulay flag ideals of graded posets.
We also give structural results for the multigraded Betti numbers of flag ideals of such posets.
The structural results on multigraded Betti numbers of flag ideals are used to characterize the class of flag ideals with linear resolutions.
\end{abstract}

\maketitle

\section{Introduction}

Let $V$ be a finite set and let $R=\kk[x_V]$ be a polynomial ring with indeterminates $x_i$, for all $i\in V$.
There are two ways to assign to any simplicial complex $\Delta$ on vertex set $V$, an ideal in the polynomial ring $R$.
The more common way is to construct the Stanley-Resiner ideal $I_\Delta$, defined as
\[
I_\Delta = <\prod_{i\in N} x_i ~|~ N\in 2^V \setminus \Delta>.
\]
The other way is by constructing the facet ideal of $\Delta$, denoted by $I(\Delta)$ and defined to be
\[
I(\Delta) = <\prod_{i\in F} x_i ~|~ F \text{ is a facet of } \Delta>.
\]
In this manuscript, we study the facet ideals of simplicial complexes $\Delta(P)$ where $\Delta(P)$ is the order complex of a finite poset $P$.
We call the facet ideal of $\Delta(P)$ the flag ideal of $P$ and denote it by $\mcF(P)$.

\medskip
The class of such ideals includes the class of bipartite graphs and letterplace ideals.
The algebraic properties of bipartite graphs has been studied by several authors in the literature.
Some of the known results include:
characterization of Cohen-Macaulay bipartite graphs \cite{Herzog-Hibi-02},
characterization of unmixed bipartite graphs \cite{Villarreal-01} and
characterization of bipartite graphs with linear resolutions \cite{Corso-Nagel-01}.
The letterplace ideals were first appeared in \cite{Ene-Herzog-Mohammadi-01}. In the same article it is shown that the Alexander dual of letterplace ideals have linear quotients which implies their Cohen-Macaulayness.
In \cite{Floystad-Greve-Herzog-01}, it is shown that several classes of monomial ideals are regular quotients of letterplace ideals.
The multigraded Betti numbers of letterplace ideals are studied in \cite{DAli-Floystad-Nematbakhsh-01}.

\medskip
The Stanley-Reisner ideal of simplicial complexes are extensively studied by several authors.
In \cite{Reisner-01}, G.~A.~Reisner gave a complete characterization of Cohen-Macaulay Stanley-Reisner ideals via the homology of links of the corresponding simplicial complexes.
The relationship between a simplicial complex and its facet ideal is less studied in the literature.
Algebraic properties of the facet ideal of a special class of simplicial complexes called the simplicial trees are studied in \cite{Faridi-01,Faridi-02}.
In this paper, we study flag ideals of posets that are equipped with a rank function.
We provide combinatorial characterizations of unmixed and Cohen-Macaulay flag ideals based on the structure of the poset.
Let $P$ be a finite graded poset.
We denote the induced subposet on the elements of rank $i$ and $i+1$ with $P_{i,i+1}$.
Let $Q_{i,i+1}$ be the subposet of $P_{i,i+1}$ after removing the rank $1$ maximal elements of $P_{i,i+1}$.
The Hasse diagram of the subposet $Q_{i,i+1}$ is a bipartite graph.
The flag ideal of $Q_{i,i+1}$ is exactly the edge ideal of the Hasse diagram of $Q_{i,i+1}$ when considered as a bipartite graph.
We study the relationship between the unmixedness and the Cohen-Macaulayness of the flag ideal $\mcF(P)$ with the same property of the bipartite graphs $Q_{i,i+1}$.
This relationship enables us to give combinatorial characterizations for unmixed and Cohen-Macaulay flag ideals generalizing characterization of unmixed and Cohen-Macaulay bipartite graphs given in \cite{Villarreal-01} and \cite{Herzog-Hibi-02} respectively.
Moreover, we show that a flag ideal has a linear resolution if and only if the edge ideals of bipartite graphs $Q_{i,i+1}$ have linear resolutions.



\medskip
{\it Unmixedness and Cohen-Macaulayness.}
In \cite{Herzog-Hibi-02}, J.~Herzog and T.~Hibi completely characterized the Cohen-Macaulay edge ideals of  bipartite graphs.
They show that the edge ideal of a bipartite graph is Cohen-Macaulay if and only if it is the quadratic letterplace ideal of a poset.
Inspired by the work of Herzog and Hibi, R.~Villarreal gave a combinatorial characterization for unmixedness of edge ideals of bipartite graphs.
A generalization of Villarreal's result for unmixedness to $d$-uniform $r$-partite hypergraphs is given in \cite{JafarpourGolzari-ZaareNahandi-01} provided that the hypergraph satisfies a certain condition.
Here we show that a similar characterization also holds for unmixed and Cohen-Macaulay flag ideal of a graded poset $P$.

\medskip
{\it Linear resolutions.}
A.~Corso and U.~Nagel characterized the class of edge ideals of bipartite graphs with linear resolutions \cite{Corso-Nagel-01}.
They show that the edge ideal of a bipartite graph has a linear resolution if and only if it is a Ferrers graph.
They explicitly described the minimal free resolution of Ferrers graphs associated to a partition $\lambda$ and computed the Betti numbers.
They generalized their results to the class of edge ideals of threshold graphs by a process called specialization in \cite{Corso-Nagel-02}.
In \cite{Nagel-Reiner-01}, U.~Nagel and V.~Reiner introduced the class of Ferrers hypergraphs as a generalization of Ferrers graphs.
They show that the edge ideal of Ferrers hypergraphs along with the edge ideal of hypergraphs associated to (squarefree) strongly stable $r$-uniform hypergraphs have linear resolutions.
A minimal cellular free resolution is constructed in this case as well.
The edge ideals of such graphs are not in general flag ideals.
We give a characterization for flag ideals with linear resolutions in Theorem \ref{thm-minres}.
We show that a flag ideal $\mcF(P)$ has a linear resolution if and only if the the poset is pure and the edge ideal of biparite graphs built on elements of consecutive ranks of the poset have linear resolutions.

\medskip
{\it Organization of the paper.}
In section \ref{sec-flagIdeals}, we provide the basic definitions and terminology required for the rest of the paper.
We also give some examples of flag ideals.
In sections \ref{sec-unmixed} and \ref{sec-CM}, we characterize unmixed and Cohen-Macaulay flag ideals within the class of graded posets.
We show that, if a flag ideal is unmixed (resp. Cohen-Macaulay) then the consecutive bipartite graphs of its Hasse diagram are unmixed (resp. Cohen-Macaulay) as well but not conversely.
Section \ref{sec-multidegrees} investigates structure of the multigraded Betti numbers of flag ideals.
We show that the restriction of the Stanley-Reisner complex of a flag ideal to a multidegree is the join of simplicial complexes associated to certain bipartite graphs.
A characterization for multidegrees of the first linear strand of such flag ideals is also given in this section.
In section \ref{sec-linearRes}, we show that a bipartite graph is a Ferrers graph if and only if it does not have an induced subgraph consisting of two disjoint edges.
This description of Ferrers graphs is used to provide a characterization of flag ideals with linear resolutions.

\medskip
{\it Acknowledgement.} The author would like to thank Gunnar Fl{\o}ystad for the reading of the manuscript and his valuable comments. 
 
\section{Ideals of maximal flags}
\label{sec-flagIdeals}

Let $P$ be a finite poset. A {\it flag} in $P$ is a chain of elements
\[
p_1 < p_2 <\cdots < p_n
\]
of $P$. Let $R=\kk[x_P]$ be a polynomial ring in indeterminates $x_i$ for all $i\in P$.
For any subset $I\subseteq P$ we denote the square-free monomial $\prod_{i\in I} x_i$ by $x_I$.
We define the {\it ideal of maximal flags} (or simply the {\it flag ideal}) $\mcF(P)$ of $P$ to be the monomial ideal generated by
all monomials $x_I$ where $I=\{p_1 ,\cdots , p_n\} \subseteq P$ and $p_1<\cdots<p_n$ is a maximal chain in $P$.

The {\it order complex} of a poset $P$ is a simplicial complex on the vertex set $P$ and the set of chains in $P$ as faces. We denote the order complex of a poset $P$ by $\Delta(P)$.
Let $\Delta$ be a simplicial complex on the vertex set $V$. The {\it facet ideal} $\mathfrak{F}(\Delta)$ of $\Delta$ is a square-free monomial ideal in the polynomial ring $\kk[x_{V}]$ generated by monomials $x_{v_1}\cdots x_{v_n}$ where $\{v_1,\ldots,v_n\}$ is a facet of $\Delta$.
The flag ideal of $P$ is the facet ideal of the order complex of $P$.

Let $P$ be a poset. A {\it rank function} on $P$ is a function $r_P:P\to \NN$ satisfying the following two conditions
\begin{enumerate}
\item $r_P(p) = 1$ if $p$ is a minimal element of $P$;
\item if $q$ covers $p$ then $r_P(q) = r_P(p) +1$.
\end{enumerate}
A poset is called {\it graded} if it has a rank function.
Not all posets admit a rank function.

\begin{example}
The {\it pentagon poset}
\begin{center}
 \begin{tikzpicture}[scale=.75, vertices/.style={draw, fill=black, circle, inner sep=1pt}]
              \node [vertices, label=right:{$a$}] (0) at (-0+0,0){};
              \node [vertices, label=right:{$b$}] (1) at (-.75+0,1.33333){};
              \node [vertices, label=right:{$c$}] (3) at (-.75+1.5,1.33333){};
              \node [vertices, label=right:{$d$}] (4) at (-0+0,2.66667){};
              \node [vertices, label=right:{$e$}] (2) at (-0+0,4){};
      \foreach \to/\from in {0/1, 0/3, 1/2, 3/4, 4/2}
      \draw [-] (\to)--(\from);
      \end{tikzpicture}
\end{center}
does not have a rank function.
\end{example}

Suppose $P$ is graded with the rank function $r_P$.
For an element $p\in P$, $r_P(p)$ is called the rank of $p$.
The number
\[\max \{r_P(p)~|~p\in P\}\]
is called the {\it rank} of $P$ and we denote it by $\overline{r}(P)$.
We also define $\underline{r}(P)$ to be
\[
\min\{ r_P(p)~|~p\in \max(P)\}
\]
where $\max(P)$ is the set of maximal elements of $P$.
A poset is called {\it pure} if all maximal elements of $P$ have the same rank, or equivalently, all maximal chains have the same length.
In the poset $P$, if an element $q$ covers an element $p$, we say $p$ is a {\it parent} of $q$ and we also say $q$ is a {\it child} of $p$.

\begin{example}
Some classes of ideals that are special cases of flag ideals are as follows.

\medskip
{\it Letterplace ideals.} Let $Q$ be a finite poset and let $n$ be an integer.
The $n$-th letterplace ideal $L(n,Q)$ is the monomial ideal in $\kk[x_{[n]\times Q}]$ generated by all monomials $x_{(1,q_1)}\cdots x_{(n,q_n)}$ where $q_1<\cdots <q_n$ is a multichain in $Q$.
Define the poset $P$ on the ground set $[n]\times Q$, with cover relations 
\[
(i,q) \leq (i+1,q') \text{ for all } 1\leq i \leq n-1 \text{ and for all } q \leq q' \text{ in } Q.
\]
It is not hard to show that $\mcF(P) = L(n,Q)$.

\medskip
{\it Edge ideals of bipartite graphs.} A bipartite graph $G$ is a simple graph where the vertex set of $G$ can be written as a union $A\cup B$, such that any edge of $G$ contains an element of $A$ and an element of $B$.
The edge ideal of $G$ is the monomial ideal $I(G)$ in the polynomial ring $\kk[x_G]$ generated by all monomials $x_ax_b$ where $\{a,b\}$ is an edge of $G$.
Let $P$ be the poset with cover relations 
\[
a \leq b \text{ where }a\in A, b\in B \text{ and } \{a,b\} \text{ is an edge of } G.
\]
We can see that $\mcF(P) = I(G)$.
We usually make no difference between the bipartite graph $G$ and the poset $P$.

\medskip
{\it Co-letterplace ideals of $V$ posets.} Let $Q$ be a finite poset. For an integer $n$, the $n$-th co-letterplace ideal $L(Q,n)$ is a square-free monomial ideal generated by monomials $x_{\Gamma\phi}$ in the polynomial ring $\kk[x_{Q\times [n]}]$ where
$\phi:Q\to [n]$ is an isotone map and
\[
\Gamma\phi = \{(q,\phi(q))~|~ q\in Q\}
\]
is the graph of $\phi$.
Let $Q$ be a poset with exactly two maximal chains and a unique minimal element.
We call such a poset, a {\it $V$ poset}, since its Hasse diagram has a $V$-like shape.
One can show that $L(Q,n)$ is a flag ideal.
Let $\{a<b_1<\cdots<b_r\}$ and $\{a<c_1<\cdots<c_s\}$ be the two maximal chains of $Q$.
Let $P$ be the poset on the ground set $Q\times [n]$ with covering relations
\begin{align*}
&(a,i) < (b_1,j)  \text{ if } i \leq j,\\
&(b_k,i) < (b_{k+1},j)  \text{ if } i \leq j, \text{ for } k=1,\ldots, r-1,\\
&(c_1,i) < (a,j) \text{ if } j \leq i,\\
&(c_{k},i) < (c_{k-1},j) \text{ if } j \leq i, \text{ for } k=2,\ldots,s.
\end{align*}
The flag ideal of $P$ is exactly the co-letterplace ideal $L(Q,n)$.
\end{example}

\begin{definition}
\label{def-VertexCover}
Let $P$ be a finite poset. We call a subset $C$ of $P$ a {\it vertex cover} if the intersection of $C$ with any maximal chain in $P$ is nonempty. A {\it minimal vertex cover} of $P$ is a vertex cover $C$ such that no proper subset of $C$ is a vertex cover of $P$. The smallest cardinality among the minimal vertex covers of $P$ is called the {\it vertex covering number} of $P$.
The poset $P$ is called {\it unmixed} if all of its minimal vertex covers have the same cardinality.
These notions coincide with the corresponding notions defined in \cite{Faridi-01} for a simplicial complex $\Delta$, when $\Delta = \Delta(P)$.
\end{definition}

A subset $C = \{p_1,\ldots,p_t\}$ is a vertex cover of $P$ if and only if for any minimal generator $g$ of $\mcF(P)$, $x_{p_i}| g$, for some $1\leq i\leq t$.
It follows that a prime ideal $(x_{p_1},\ldots,x_{p_t})$ is a minimal prime ideal of $\mcF(P)$ if and only if $\{p_1,\ldots,p_t\}$ is a minimal vertex cover of $P$.
Therefore, the Alexander dual is
\[
\mcF(P)^A = \{\prod_{p\in C}x_p | C \text{ is a minimal vertex cover of } P\}.
\]
\noindent
Furthermore,
\[
\dim R/\mcF(P) = |P| - \text{ vertex covering number of $P$},
\]
and
\[
\height \mcF(P) = \text{ vertex covering number of $P$}.
\]

Let $P$ be a finite graded poset of rank $r$ with a rank function $r_P$.
Let $[r]=\{1<\cdots<r\}$.
For any subset $S\subseteq [r]$ let $P_S$ be the induced subposet of $P$ on the set
\[
P_S = \{p\in P|r_P(p) \in S\}
\]
Let $S=\{r_1,\cdots, r_s\}$ where $1\leq r_1< \cdots<r_s\leq r$.
We call $\mathcal{F}(P_S)$ the {\it partial flag ideal} with respect to $S$ and denote it by $\mcF(P,S)$ or $\mcF(P,\{r_1,\cdots,r_s\})$.
In particular, when $S=\{s\}$ consists of a single element then $P_s$ denotes the set of elements of rank $s$ in $P$.



Let $P$ be a finite poset.
We say that two elements $a$ and $b$ of $P$ are {\it connected} if there is a sequence $a=p_0,\ldots,p_n=b$ of elements of $P$ such that for $i=0,\ldots,n-1$, either $p_i \leq p_{i+1}$ or $p_{i+1}\leq p_i$.
The being connected relation is an equivalence relation. We call equivalence classes of this relation, {connected components} of $P$.
These are exactly the connected components of the Hasse diagram of $P$ when viewed as a directed graph.


\begin{remark}
In \cite{Faridi-01}, S.~Faridi introduced the notion of a simplicial tree, generalizing the notion of trees in graph theory to simplicial complexes. She studied the facet ideal of such complexes.
A tree is a bipartite graph. Hence regarded as a 1-dimensional simplicial complex, its facet ideal is the flag ideal of a rank 2 poset. But the facet ideal of a simplicial tree is not in general a flag ideal.

As an example consider the simplicial complex 
\[
\Delta = <\{x_1,x_2,x_3\},\{x_1,x_4,x_5\},\{x_2,x_6,x_7\},\{x_3,x_8,x_9\}>.
\]
There is no poset $P$ on vertices $x_1,\ldots,x_9$ such that the flag ideal of $P$ gives the facet ideal of $\Delta$.
\end{remark}

\section{Unmixed flag ideals}
\label{sec-unmixed}

I this section we characterize the class of unmixed graded posets.
The main theorem of this section, Theorem \ref{thm-unmixedness}, generalizes the result of R.~Villarreal on unmixed bipartite graphs to the class of graded posets.

\begin{definition}
An ideal $I$ of a Noetherian ring $R$ is called unmixed if the height (or codimension) of $I$ is equal to the height of every associated prime $\mfp$ of $R/I$.
\end{definition}

There is a different notion of unmixedness used in the literature that defines an ideal $I$ to be unmixed if it has no embedded associated prime ideal.
The former notion of unmixedness implies the latter but not conversely. For example squarefree monomial ideals in a polynomial ring are always unmixed in the latter sense.

\medskip
The flag ideal of a poset $P$ is a square-free monomial ideal and $\ass(\mcF(P)) = \min(\ass(\mcF(P)))$.
The minimal prime ideals of $\mcF(P)$ are in one-to-one correspondence with minimal vertex covers of $P$ as explained by the discussion after Definition \ref{def-VertexCover}.
Therefore, $\mcF(P)$ is unmixed if and only if all minimal vertex covers of $P$ have the same cardinality, i.e. if and only if $P$ is an unmixed poset.
In particular, a poset $P$ is unmixed if and only if all of its connected components are unmixed.



\begin{lemma}[Rank selection for unmixedness] \label{pro-rankselectionunmixed}
Let $P$ be a finite unmixed poset of rank $r$.
\begin{enumerate}
\item If $P$ is graded then for any subset $S = \{i,i+1,\ldots,i+j\}$ of $[r]$, $\mcF(P,S)$ is unmixed;
\item If $P$ is pure then for any subset $S\subseteq [r]$, $\mcF(P,S)$ is unmixed.
\end{enumerate}
\end{lemma}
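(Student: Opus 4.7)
The strategy is to prove (2) directly and reduce (1) to two elementary one-rank removals. In each case I will exhibit a correspondence sending a minimal vertex cover $C$ of $P_S$ to a minimal vertex cover $C\cup E$ of $P$, where $E\subseteq P\setminus P_S$ depends only on $P$ and $S$; unmixedness of $P$ then forces $|C|$ to be constant.

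For (2) I will exploit purity: every maximal chain $M=q_1<\cdots<q_r$ of $P$ meets every rank, so $M\cap P_S=\{q_k:k\in S\}$ is a maximal chain of $P_S$ (no insertion or extension is possible since the ranks in $[r]\setminus S$ furnish no elements of $P_S$), and conversely every maximal chain of $P_S$ extends to a maximal chain of $P$ by filling the missing ranks using the graded structure (and extending up to a rank-$r$ element using purity). Thus a subset $C\subseteq P_S$ covers $P_S$ iff it covers $P$, so a minimal vertex cover of $P_S$ is already a minimal vertex cover of $P$; here $E=\emptyset$ and unmixedness of $P$ yields the result.

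For (1), write $S=\{i,\ldots,i+j\}$. I will iterate two operations on a graded poset $Q$: top removal $Q\leadsto Q_{\{1,\ldots,\overline{r}(Q)-1\}}$ and bottom removal $Q\leadsto Q_{\{2,\ldots,\overline{r}(Q)\}}$. Each yields a graded poset (possibly after shifting the rank function down by one), and alternating them reaches every consecutive $S$, so it suffices to treat each step separately. For top removal I claim the lifting is trivial ($E=\emptyset$): a minimal vertex cover $C$ of $P_{\{1,\ldots,r-1\}}$ is already a minimal vertex cover of $P$, because every maximal chain of $P$ either lies in $P_{\{1,\ldots,r-1\}}$ or has the form $M'\cup\{q_r\}$ for some maximal chain $M'$ of $P_{\{1,\ldots,r-1\}}$, and a witness chain for the minimality of $c\in C$ lifts to $P$ by optionally adjoining a rank-$r$ child of its top, which is automatically outside $C$.

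For bottom removal, the singleton maximal chains of $P$ coming from isolated rank-$1$ elements are not covered by any $C\subseteq P_{\{2,\ldots,r\}}$, so I set $E=I:=\{p\in P_1:p\text{ is maximal in }P\}$ and prove $C\cup I$ is a minimal vertex cover of $P$: coverage is immediate, and the witness chain for $c\in C$ lifts by adjoining a rank-$1$ parent of its bottom, which has a rank-$2$ child and thus lies outside $I$. Since $|I|$ depends only on $P$ and $|C\cup I|=|C|+|I|$, unmixedness of $P$ forces $|C|$ to be constant. The main obstacle throughout is the bookkeeping needed to secure minimality under the lifting, in particular checking that no adjoined element spuriously falls into $C\cup E$ or into the witness chain; once this is handled, the result follows from unmixedness of $P$.
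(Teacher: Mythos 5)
Your proposal is correct and follows essentially the same route as the paper: both prove the statement by lifting a minimal vertex cover $C$ of $P_S$ to a minimal vertex cover $C\cup E$ of $P$, where $E$ consists of maximal elements of $P$ of rank below $\min S$ (your iterated top/bottom removals accumulate exactly the paper's set $D=\{p\in\max P \mid r_P(p)<i\}$, and your part (2) is the same argument with $E=\emptyset$). The only difference is organizational — you factor the lifting through single-rank removals instead of doing it in one step — which does not change the underlying idea.
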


\begin{proof}
$(1)$ Suppose $P$ is unmixed and admits a rank function.
The set of elements of minimal rank $P_1$ is a minimal vertex cover of $P$.
Let $|P_1|=t$. Since $P$ is unmixed, any minimal vertex cover of $P$ is of cardinality $t$.
Suppose $C$ is a minimal vertex cover of $P_S$.
Let $i=\min S$, be the minimal integer in $S$ and 
let
\[
D=\{p\in \max P ~|~ r_P(p) < i\}.
\]
We claim that $C\cup D$ is a minimal vertex cover of $P$.
Let $\bfc = \{p_1< \cdots < p_n\}$ be a maximal chain in $P$.
If $r_P(p_n) < i$ then $p_n\in D$.
If $r_P(p_n) \geq i$ then $\bfc$ defines a maximal chain in $P_S$, hence it intersects $C$.
In either case $\bfc$ intersects $C\cup D$.
Therefore, $C\cup D$ is a vertex cover of $P$ and it contains a minimal vertex cover $E$.
Note that if we remove any point of $D$ from $C\cup D$ then it is no longer a minimal vertex cover. 
If $E \neq C\cup D$ then $E = C' \cup D$ where $C'$ is a proper subset of $C$.
Since $C$ is a minimal vertex cover, there is a maximal chain in $P_S$ that does not intersect $C'$. Any such chain can be extended to a maximal chain of $P$ that does not intersect $E$, which is a contradiction. Therefore any minimal vertex cover $C$ of $P_S$ gives a minimal vertex cover
\[C\cup \{p\in \max P ~|~ r_P(p) < i\}\]
of $P$. Since $P$ is unmixed, all of them must have same cardinality.

\medskip
$(2)$ Proof of this part is similar.
\end{proof}

\begin{example}
\label{exp-NotUnmixed}
Consider the poset $P$ with Hasse diagram
\begin{center}
 \begin{tikzpicture}[scale=.75, vertices/.style={draw, fill=black, circle, inner sep=1pt}]
             \node [vertices, label=right:{${a}_{1}$}] (0) at (-1.5+0,0){};
             \node [vertices, label=right:{${b}_{1}$}] (2) at (-1.5+1.5,0){};
             \node [vertices, label=right:{${c}_{1}$}] (5) at (-1.5+3,0){};
             \node [vertices, label=right:{${a}_{2}$}] (1) at (-1.5+0,1.33333){};
             \node [vertices, label=right:{${b}_{2}$}] (3) at (-1.5+1.5,1.33333){};
             \node [vertices, label=right:{${c}_{2}$}] (4) at (-1.5+3,1.33333){};
             \node [vertices, label=right:{${a}_{3}$}] (6) at (-1.5+0,2.66667){};
             \node [vertices, label=right:{${b}_{3}$}] (8) at (-1.5+1.5,2.66667){};
             \node [vertices, label=right:{${c}_{3}$}] (7) at (-1.5+3,2.66667){};
     \foreach \to/\from in {0/1, 1/6, 2/4, 2/1, 2/3, 3/8, 4/8, 4/7, 5/4}
     \draw [-] (\to)--(\from);
     \end{tikzpicture}
\end{center}
Both of the subposets $P_{1,2}$ an $P_{2,3}$ are unmixed but $P$ itself is not unmixed.
The subset $\{a_1,b_1,b_3,c_3\}$ is a minimal vertex cover of cardinality 4 while all the other minimal vertex covers have cardinality 3.
\end{example}

\begin{definition}
Let $P$ be a finite poset.
We call a subset $A$ of elements of $P$ an {\it independent set} if for every maximal chain $\bfc=\{p_1 <\cdots< p_n\}$ of $P$, $\bfc \nsubseteq A$.
A {\it maximal independent set} is an independent set $A$ which is not a pure subset of any other independent set.
\end{definition}

It is easy to show that a subset $A$ is a maximal independent set of a poset $P$ if and only if $C=P \backslash A$ is a minimal vertex cover of $P$.

\begin{theorem}
\label{thm-unmixedness}
Let $P$ be a graded poset of rank $r$.
The ideal $\mcF(P)$ is unmixed if and only if
\begin{enumerate}
\item $|P_1| \geq |P_2| \geq \cdots \geq |P_r|$;
\item for each $1\leq i\leq r$ there is an ordering $P_i = \{a^i_1,\ldots,a^i_{t_i}\}$ such that
for each $2\leq i\leq r$, and each $1\leq u\leq t_i$, $a^{i-1}_u \leq a^i_u$ in $P$;
\item for $i<j$ if $\{a^i_u < a^{i+1} < \cdots < a^{j-1} < a^j_v\}$ and $\{a^i_v < b^{i+1} < \cdots < b^{j-1} < a^j_w\}$ are two saturated chains in $P$ (with $a^i_u, a^i_v \in P_i$ and $a^j_v,a^j_w \in P_j$), then there is a saturated chain $\{a^i_u < c^{i+1} < \cdots <c^{j-1} <a^j_w\}$ such that 
\[
\{c^{i+1},\ldots,c^{j-1}\} \subseteq \{a^{i+1} , \cdots , a^{j-1} \} \cup \{b^{i+1} , \cdots , b^{j-1}\}.
\]
That is, all the intermediate elements in the chain 
\[\{a^i_u < c^{i+1} < \cdots < c^{j-1} <a^j_w\}\]
are among the elements of the two given chains.
\item for $i<j<k$, if $\{a^i_u < a^{i+1} < \cdots < a^{k-1} < a^k_v\}$ and $\{a^i_v < b^{i+1} < \cdots < b^{j-1} < a^j_w\}$ are two saturated chains in $P$ (with $a^i_u, a^i_v \in P_i$, $a^k_v \in P_k$ and $a^j_w \in P_j\cap \max(P)$), then there is a saturated chain $\{a^i_u < c^{i+1} < \cdots <c^{j-1} <a^j_w\}$ such that 
\[
\{c^{i+1},\ldots,c^{j-1}\} \subseteq \{a^{i+1} , \cdots , a^{j-1} \} \cup \{b^{i+1} , \cdots , b^{j-1}\}.
\]
\end{enumerate}
\end{theorem}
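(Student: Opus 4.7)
The plan is to prove each direction of the equivalence separately, with the rank-selection Lemma \ref{pro-rankselectionunmixed} and Villarreal's characterization of unmixed bipartite graphs as the two main inputs.

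For necessity, assume $\mcF(P)$ is unmixed. Applying rank selection to $S=\{i,i+1\}$, the partial flag ideal $\mcF(P_{i,i+1})$ is unmixed. Since
\[
\mcF(P_{i,i+1}) = (x_p : p \in \max(P) \cap P_i) + I(Q_{i,i+1}),
\]
unmixedness passes to the bipartite edge ideal $I(Q_{i,i+1})$. By Villarreal's theorem, $Q_{i,i+1}$ carries a perfect matching $M_i : P_{i+1} \to P_i \setminus \max(P)$ together with a transitivity-style compatibility. Threading the matchings $M_{r-1},\ldots,M_1$ yields a global labelling $P_i = \{a^i_1,\ldots,a^i_{t_i}\}$ realizing (1) by vertex counting and (2) by construction. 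Conditions (3) and (4) are handled by contrapositive: from witness chains violating (3) (or (4) in the presence of a non-top maximal element) I construct a minimal vertex cover of $P$ of cardinality strictly greater than $|P_1|$. Explicitly, start from the cover $P_1$, delete $a^1_u$, and re-cover the maximal chains through $a^1_u$ by inserting a judiciously chosen subset of the intermediate and endpoint vertices of the two witness chains; the absence of the promised merging chain is exactly what forces at least two insertions, yielding the excess in cardinality and contradicting unmixedness.

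For sufficiency, assume (1)--(4) and define the threads
\[
\gamma_u := \{a^1_u < a^2_u < \cdots < a^{L_u}_u\}, \qquad L_u := \max\{i : t_i \geq u\},
\]
for $u=1,\ldots,t_1$. By (1) the threads $\gamma_u$ partition $P$, and by (2) each is a saturated chain. Conditions (3) and (4) ensure moreover that each thread is a maximal chain: a child $a^{L_u+1}_v$ of $a^{L_u}_u$ would necessarily satisfy $v<u$, and applying (3) (or (4), when $a^{L_u+1}_v$ lies in $\max(P)$) to $\gamma_v$ and the cover $a^{L_u}_u < a^{L_u+1}_v$ contradicts the definition of $L_u$. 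It follows that every minimal vertex cover $C$ of $P$ meets each thread, giving $|C| \geq |P_1|$. For the reverse, suppose $a^i_u, a^k_u \in C$ with $i<k$; by minimality there is a maximal chain $\bfc$ with $\bfc \cap C = \{a^i_u\}$. Using (3)--(4), I re-route $\bfc$ through alternative threads to obtain a maximal chain $\bfc'$ that contains $a^k_u$ but is disjoint from $\{a^i_u\}$, contradicting the uniqueness of $a^i_u$ in $\bfc \cap C$. Hence $|C \cap \gamma_u|=1$ for every $u$, so $|C|=|P_1|$ and unmixedness follows.

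The principal technical obstacle is the re-routing argument in the sufficiency step and its mirror image, the construction of an oversized minimal vertex cover, in the necessity step. Conditions (3) and (4) are tailored precisely so that any saturated chain through a troublesome element can be diverted through the given witness vertices without escaping a controlled sub-poset; the intermediate-elements clause is what makes this containment precise, and is essential both for keeping the constructed vertex cover minimal in the necessity direction and for keeping the re-routed chains maximal in the sufficiency direction. Care is needed to propagate the re-routing one rank at a time and to distinguish the cases where maximal elements of $P$ occur at non-top ranks, which is exactly where condition (4) (as opposed to (3)) comes into play.
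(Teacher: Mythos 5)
Your treatment of conditions (1) and (2) (rank selection plus Villarreal/Herzog--Hibi matchings, threaded into a global labelling) is the same as the paper's, but both directions of your argument for (3) and (4) contain genuine gaps. In the sufficiency direction, the pivotal claim that (3) and (4) force each thread $\gamma_u$ to be a \emph{maximal} chain is false, so it cannot be proved by the argument you sketch. Take $P=\{p,q,m\}$ with $p<m$ and $q<m$, where $p,q$ have rank $1$ and $m$ has rank $2$: with $a^1_1=p$, $a^1_2=q$, $a^2_1=m$, conditions (1)--(4) all hold ((3) is trivially satisfied and (4) is vacuous), yet $\gamma_2=\{q\}$ is not a maximal chain, and indeed this poset is not unmixed (its minimal covers are $\{m\}$ and $\{p,q\}$). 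So your step ``a child $a^{L_u+1}_v$ with $v<u$ together with (3)/(4) contradicts the definition of $L_u$'' cannot work; what is really needed is that the top of each thread is a maximal element of $P$, which is exactly what the necessity construction delivers (unmatched elements of $P_i$ are maximal) and what the paper's own sufficiency proof uses tacitly as part of (2) -- it is not a consequence of (3)/(4). Separately, your re-routing step ends with the wrong contradiction: producing a maximal chain through $a^k_u$ avoiding $a^i_u$ contradicts nothing; as in the paper one must splice the chain below $a^k_u$ (via the chain guaranteed by (3) or (4)) onto the chain above $a^i_u$ to obtain a maximal chain \emph{disjoint from $C$}, contradicting that $C$ is a cover -- and the ``intermediate elements'' clause is precisely what keeps the spliced chain inside the two chains meeting $C$ only in $a^i_u$ and $a^k_u$, hence off $C$.

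In the necessity direction your explicit construction for (3)/(4) fails on the paper's own Example~\ref{exp-NotUnmixed}. There (3) fails for the witness chains $\{c_1<c_2<b_3\}$ and $\{b_1<a_2<a_3\}$, but after deleting $c_1$ from the cover $P_1$ a single insertion, namely $c_2$, already re-covers both maximal chains through $c_1$, and $\{a_1,b_1,c_2\}$ is again a minimal vertex cover of size $t_1=3$; so the absence of the merging chain does \emph{not} force two insertions. Moreover the actual oversized cover $\{a_1,b_1,b_3,c_3\}$ contains $c_3$, which lies on neither witness chain, so it is unreachable by your recipe. The paper's mechanism is different and you would need it (or an equally precise substitute): when (3) fails, the union of the two witness chains with $a^i_v$ and $a^j_v$ removed contains no maximal chain of the rank-selected subposet $Q=P_{i,\ldots,j}$, hence is an independent set of $Q$; extending it to a maximal independent set and taking the complement yields a minimal vertex cover of $Q$ (which is unmixed by Lemma~\ref{pro-rankselectionunmixed}) containing both $a^i_v$ and $a^j_v$, contradicting that every minimal cover of $Q$ has cardinality $t_i$ and therefore meets each of the $t_i$ disjoint threads exactly once.
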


\begin{proof}
$(\Rightarrow)$ If $\mcF(P)$ is unmixed then by rank selection lemma, Lemma \ref{pro-rankselectionunmixed}, each of the posets $P_{i,i+1}$ are unmixed for $i=1,\ldots,r-1$.
This implies that the induced subgraph of $P_{i,i+1}$ on $(P_i \backslash \max(P)) \cup P_{i+1}$ is an unmixed bipartite graph. Hence $|P_i| \geq |P_{i+1}|$.
Let $B_i = P_i \backslash \max(P)$.
By the discussion in \cite[page 163]{Herzog-Hibi-01}, $|B_i| = |P_{i+1}|$ and we can order elements of $P_i$ and $P_{i+1}$ as $P_i = \{a^i_1,\ldots,a^i_{t_i}\}$ and $P_{i+1} = \{a^{i+1}_1,\ldots,a^{i+1}_{t_{i+1}}\}$ such that for each $1\leq u\leq |B_i|$, there is an edge between $a^{i}_u$ and $a^{i+1}_u$ in $P_{i,i+1}$. This proves $(1)$ and $(2)$.

By $(1)$, there are $t_1$ disjoint maximal chains $\{a^1_j<\ldots<a_j^{r_j}\}$ in $P$, for $j=1,\ldots,t_1$. 
Observe that if $C$ is a minimal vertex cover of $P$ then $|C|=t_1$ and $C$ intersects with each of the maximal chains $\{a^1_j<\cdots<a^{r_j}_j\}$ in exactly one point.
Now let $\{a^i_u < a^{i+1} < \cdots < a^{j-1} < a^j_v\}$ and $\{a^i_v < b^{i+1} < \cdots < b^{j-1} < a^j_w\}$ be two saturated chains in $P$ as in $(3)$. If $\{a^i_u,a^j_w\}\cup \{a^{i+1} , \cdots , a^{j-1} \} \cup \{b^{i+1} , \cdots , b^{j-1}\}$ does not contain a maximal chain in the poset $Q = P_{i,\ldots,j}$ then it is an independent set of $Q$. We extend it to a maximal independent set $A$. Note that $Q$ is also unmixed and the complement of $A$ in the ground set of $Q$ form a minimal vertex cover $C$.
By construction, $a^i_v$ and $a^k_v$ are in $C$, which is in contradiction with the fact that $C$ intersects the chain $\{a^i_v<\cdots<a^k_v\}$ in exactly one point.
Proof of $(4)$ is similar.

\medskip
$(\Leftarrow)$
Let $C$ be a minimal vertex cover of $P$. For each $1\leq v\leq t_1$, $C$ intersects the set $\{a^1_v,\ldots,a^{r_v}_v\}$ in at least one point. It suffices to show that the intersection contains exactly one point.
Suppose for some $v$, $C$ contains the points $a^i_v$ and $a^j_v$ with $i<j$.
For a point $p\in P$, let $\mcM(p)$ be the set of all maximal chains in $P$ containing $p$.
Suppose $p$ is a point in $C$.
If any chain in $\mcM(p)$ intersects $C$ in at least two points, then $C\backslash\{p\}$ is also a vertex cover, which contradicts our assumption that $C$ is a minimal vertex cover.
Therefore, if $p\in C$, then there exists at least a chain in $\mcM(p)$ that intersects $C$ only in the point $p$.
Let $\bfa = \{a^1<\ldots<a^r\}$ be a maximal chain in $\mcM(a^i_v)$ that intersects $C$ only in $a^i_v$.
Similarly let $\bfb=\{b^1<\ldots<b^s\}$ be a maximal chain in $\mcM(a^j_v)$ that intersects $C$ in only one point.

Consider the chains $\{a^i_v<\cdots< a^{\min\{j,r\}}\}$ and $\{b^i<\cdots<a^j_v\}$.
If $r\geq j$ then by $(3)$, there is a saturated chain $\bfc$ from $b^i$ to $a^j$ that does not intersect $C$. Now $\{b^1,\ldots,b^{i-1}\} \cup \bfc \cup \{a^{j+1},\ldots,a^r\}$ gives a maximal chain in $P$ that does not intersect $C$, which is in contradiction with our assumption that $C$ is a minimal vertex cover.
If $r<j$ then by $(4)$, there is a saturated chain $\bfc$ from $b^i$ to $a^r$. In this case, we end up with the maximal chain $\{b^1,\ldots,b^{i-1}\} \cup \bfc$, which leads us to the same contradiction.
\end{proof}

\begin{remark}
If $P$ is a pure poset then the condition $(4)$ is superfluous.
The following example shows that the conditions $(3)$ and $(4)$ in Theorem \ref{thm-unmixedness} can not be replaced by the following weaker conditions.
\begin{enumerate}
\item[$(3)'$] For $i<j$, if $\{a^i_u < a^{i+1} < \cdots < a^{j-1} < a^j_v\}$ and $\{a^i_v < b^{i+1} < \cdots < b^{j-1} < a^j_w\}$ are two saturated chains in $P$ (with $a^i_u, a^i_v \in P_i$ and $a^j_v,a^j_w \in P_j$), then there is saturated chain $\{a^i_u < c^{i+1} < \cdots <c^{j-1} <a^j_w\}$ in $P$.
\item[$(4')$] For $i<j<k$, if $\{a^i_u < a^{i+1} < \cdots < a^{k-1} < a^k_v\}$ and $\{a^i_v < b^{i+1} < \cdots < b^{j-1} < a^j_w\}$ are two saturated chains in $P$ (with $a^i_u, a^i_v \in P_i$, $a^k_v \in P_k$ and $a^j_w \in P_j\cap \max(P)$), then there is a saturated chain $\{a^i_u < c^{i+1} < \cdots <c^{j-1} <a^j_w\}$ in $P$.
\end{enumerate}
\end{remark}

\begin{example}
Consider the following poset.
\begin{center}
 \begin{tikzpicture}[scale=0.75, vertices/.style={draw, fill=black, circle, inner sep=1pt}]
              \node [vertices, label=right:{${a}_{1}$}] (0) at (-2.25+0,0){};
              \node [vertices, label=right:{${b}_{1}$}] (2) at (-2.25+1.5,0){};
              \node [vertices, label=right:{${c}_{1}$}] (4) at (-2.25+3,0){};
              \node [vertices, label=right:{${d}_{1}$}] (6) at (-2.25+4.5,0){};
              \node [vertices, label=right:{${a}_{2}$}] (1) at (-2.25+0,1.33333){};
              \node [vertices, label=right:{${b}_{2}$}] (3) at (-2.25+1.5,1.33333){};
              \node [vertices, label=right:{${c}_{2}$}] (5) at (-2.25+3,1.33333){};
              \node [vertices, label=right:{${d}_{2}$}] (7) at (-2.25+4.5,1.33333){};
              \node [vertices, label=right:{${a}_{3}$}] (8) at (-2.25+0,2.66667){};
              \node [vertices, label=right:{${b}_{3}$}] (9) at (-2.25+1.5,2.66667){};
              \node [vertices, label=right:{${c}_{3}$}] (10) at (-2.25+3,2.66667){};
              \node [vertices, label=right:{${d}_{3}$}] (11) at (-2.25+4.5,2.66667){};
      \foreach \to/\from in {0/5, 0/1, 1/8, 1/9, 2/7, 2/3, 3/9, 4/5, 5/10, 5/11, 6/7, 7/11}
      \draw [-] (\to)--(\from);
 \end{tikzpicture}
\end{center}
This poset is pure and it satisfies the condition $(3)'$ above, but it is not unmixed. 
\end{example}

\section{Cohen-Macaulay flag ideals}
\label{sec-CM}

In \cite{Herzog-Hibi-02}, J.~Herzog and T.~Hibi showed that the edge ideal of a bipartite graph $G$ on vertex set $A\cup B$ is Cohen-Macaulay if and only if $|A|=|B|$ and there is a labeling
$A =\{a_1,\ldots,a_t\}$ and $B=\{b_1,\ldots,b_t\}$ such that
\begin{itemize}
\item[(a)]\label{a1} $\{a_i,b_i\}$ are edges of $G$, for $i=1,\ldots,t$;
\item[(b)] if $\{a_i,b_j\}$ and $\{a_j,b_k\}$ are edges of $G$, then $\{a_i,b_k\}$ is an edge;
\item[(c)] if $\{a_i,b_j\}$ is an edge then $i\leq j$.
\end{itemize} 
Now let $P$ be a poset with ground set $\{p_1,\ldots,p_t\}$.
Define a binary relation on $P$ with $p_i\leq p_j$ if and only if $\{a_i,b_j\}$ is an edge of $G$.
The condition (a), shows that $\leq$ is reflexive, (b) shows that it is transitive and finally (c) implies that $\leq$ is anti-symmetric. Therefore, they actually showed that Cohen-Macaulay edge ideals of bipartite graphs are exactly the quadratic letterplace ideals of posets.
In Theorem \ref{thm-mainCM}, we generalize their result to the class of flag ideals of graded posets.
The condition $(2)$ of Theorem \ref{thm-mainCM}, is similar to the condition (a) above, conditions $(3)$ and $(4)$ resemble the transitivity condition (b) and finally $(5)$ is similar to the condition (c).

\begin{lemma} \label{lem-localization}
Let $I$ be a monomial ideal in a polynomial ring $R=\kk[x_1,\ldots,x_n]$.
Let $G(I)=\{g_1,\ldots,g_s\}$ be the unique set of minimal generators of $I$.
For each $i$, let $g'_i$ be the monomial obtained from $g_i$ after replacing $x_1$ with $1$ (evaluating $x_1 = 1$) in $g_i$.
Let $I'$ be the monomial ideal generated by $\{g'_1,\ldots,g'_s\}$ in $\kk[x_2,\ldots,x_n]$.
If $I$ is Cohen-Macaulay then $I'$ is also Cohen-Macaulay.
\end{lemma}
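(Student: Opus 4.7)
The plan is to interpret the passage from $I$ to $I'$ as a localization followed by a descent, using that setting $x_1 = 1$ in a monomial has the same effect as inverting $x_1$, up to units.

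Put $R' = \kk[x_2, \ldots, x_n]$ and write each $g_i = x_1^{e_i} g'_i$ with $g'_i \in R'$ and $e_i \geq 0$. In $R[x_1^{-1}] = R'[x_1, x_1^{-1}]$, the element $x_1$ is a unit, so $g_i$ and $g'_i$ generate the same principal ideal. Hence $I \cdot R[x_1^{-1}] = I' \cdot R[x_1^{-1}]$, and
\[
(R/I)[x_1^{-1}] \;\cong\; (R'/I')[x_1, x_1^{-1}].
\]
Since localization preserves Cohen-Macaulayness for Noetherian rings, the hypothesis on $R/I$ forces $A[x_1, x_1^{-1}]$ to be Cohen-Macaulay, where I set $A := R'/I'$.

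It remains to descend Cohen-Macaulayness from $A[x_1, x_1^{-1}]$ to $A$. The Laurent ring $A[x_1, x_1^{-1}]$ is a free $A$-module with basis $\{x_1^n : n \in \mathbb{Z}\}$, so $x_1 - 1$ is a nonzerodivisor on it and the quotient $A[x_1, x_1^{-1}]/(x_1 - 1)$ is identified with $A$ via $x_1 \mapsto 1$. Given a maximal ideal $\mathfrak{m}$ of $A$, the ideal $\mathfrak{n} = \mathfrak{m} A[x_1, x_1^{-1}] + (x_1 - 1)$ is maximal in $A[x_1, x_1^{-1}]$ with residue field $A/\mathfrak{m}$; localizing at $\mathfrak{n}$ yields a Cohen-Macaulay local ring, and dividing it by the regular element $x_1 - 1$ produces $A_\mathfrak{m}$, which is therefore Cohen-Macaulay. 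Since $A$ is a finitely generated $\kk$-algebra, Cohen-Macaulayness at every maximal ideal forces $A$ itself to be Cohen-Macaulay.

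The only substantive point is this descent step; the identification $I \cdot R[x_1^{-1}] = I' \cdot R[x_1^{-1}]$ and the passage through the localization are purely formal. One could alternatively frame the descent as faithfully flat descent of Cohen-Macaulayness along the free ring extension $A \to A[x_1, x_1^{-1}]$, but the explicit regular-element argument is equally short.
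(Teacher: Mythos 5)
Your proof is correct. The first half coincides with the paper's: both localize at $x_1$, observe that $g_i$ and $g_i'$ generate the same ideal once $x_1$ is a unit, and identify $(R/I)_{x_1}$ with the Laurent extension $(R'/I')[x_1,x_1^{-1}]$ (which the paper writes as $\kk[x_2,\ldots,x_n]/I' \otimes_\kk \kk[x_1,x_1^{-1}]$). Where you diverge is the descent step: the paper disposes of it by citing the theorem of Bouchiba and Kabbaj that a Noetherian tensor product $A\otimes_\kk B$ is Cohen--Macaulay if and only if both factors are, whereas you argue directly that $x_1-1$ is a nonzerodivisor on the free $A$-module $A[x_1,x_1^{-1}]$, that $\mathfrak{n}=\mathfrak{m}A[x_1,x_1^{-1}]+(x_1-1)$ is maximal with $A[x_1,x_1^{-1}]_{\mathfrak{n}}/(x_1-1)\cong A_{\mathfrak{m}}$, and that a Cohen--Macaulay local ring modulo a regular element stays Cohen--Macaulay. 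Your route is elementary and self-contained (it uses only standard facts about regular elements and localization, and in fact only Noetherianness of $A$ is needed at the end, not finite generation over $\kk$), while the paper's is shorter on the page but leans on an external result that is considerably more general than the Laurent-ring situation requires; faithfully flat descent, which you mention as an alternative, would serve equally well. Both arguments are valid.
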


\begin{proof}
Since $R/I$ is Cohen-Macaulay, its localization $(R/I)_{x_1}$ is also a Cohen-Macaulay ring.
We have
\[
(R/I)_{x_1} \cong \kk[x_2,\ldots,x_n]/I' \otimes_\kk \kk[x_1,x_1^{-1}].
\] 
The assertion now follows from \cite[Theorem 2.1]{Bouchiba-Kabbaj-01} stated below.
\end{proof}

\begin{theorem}[S.~Bouchiba, S.~Kabbaj]
Let $A$ and $B$ be two (commutative with identity) $\kk$-algebras such that $A\otimes_\kk B$ is noetherian.
Then $A\otimes_\kk B$ is Cohen-Macaulay if and only if $A$ and $B$ are Cohen-Macaulay rings.
\end{theorem}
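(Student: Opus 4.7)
The plan is to prove both directions by exploiting the faithful flatness of the natural maps $A \to A \otimes_\kk B$ and $B \to A \otimes_\kk B$, which is automatic because $A$ and $B$ are nonzero flat $\kk$-modules (vector spaces), and nonzero flat modules over a field are faithfully flat.

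For the ``if'' direction, suppose $A$ and $B$ are Cohen-Macaulay. Fix a prime $P$ of $A \otimes_\kk B$ and set $\mfp = P \cap A$, $\mathfrak{q} = P \cap B$. The local ring $(A \otimes_\kk B)_P$ is a further localization of the local--local tensor $A_\mfp \otimes_\kk B_\mathfrak{q}$, and by hypothesis $A_\mfp$ and $B_\mathfrak{q}$ are Cohen-Macaulay local rings. Choose systems of parameters $x_1, \ldots, x_d$ for $A_\mfp$ and $y_1, \ldots, y_e$ for $B_\mathfrak{q}$, which are regular sequences. I would then show that their images form a regular sequence in $A_\mfp \otimes_\kk B_\mathfrak{q}$: flatness of $A_\mfp \otimes_\kk B_\mathfrak{q}$ over $A_\mfp$ (since $B_\mathfrak{q}$ is $\kk$-flat) preserves $\mathbf{x}$-regularity, and after passing to the quotient, $(A_\mfp/\mathbf{x}) \otimes_\kk B_\mathfrak{q}$ is flat over $B_\mathfrak{q}$, which preserves $\mathbf{y}$-regularity. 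Matching the length $d+e$ of this sequence against the Krull dimension of $(A \otimes_\kk B)_P$ via the standard dimension formula for faithfully flat maps then gives Cohen-Macaulayness.

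For the ``only if'' direction, suppose $A \otimes_\kk B$ is Cohen-Macaulay. I would invoke descent of the Cohen-Macaulay property along faithfully flat ring maps. For any prime $\mfp$ of $A$, faithful flatness of $A \to A \otimes_\kk B$ guarantees a prime $P$ of $A \otimes_\kk B$ contracting to $\mfp$. The induced local homomorphism $A_\mfp \to (A \otimes_\kk B)_P$ is faithfully flat with Cohen-Macaulay target, so the depth formula $\operatorname{depth}((A \otimes_\kk B)_P) = \operatorname{depth}(A_\mfp) + \operatorname{depth}(\text{closed fiber})$, combined with the matching dimension formula, forces $A_\mfp$ to be Cohen-Macaulay. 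The same argument applied to $B$ completes this direction.

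The main obstacle is bookkeeping the ``cross-fiber'' contribution in the dimension formula: the local--local tensor satisfies $\dim(A_\mfp \otimes_\kk B_\mathfrak{q})_P = \dim A_\mfp + \dim B_\mathfrak{q} + \dim(\kappa(\mfp) \otimes_\kk \kappa(\mathfrak{q}))_{\overline{P}}$, where $\overline{P}$ is the image of $P$ in the residue tensor. To close the gap between the length of the constructed regular sequence and the Krull dimension, one needs a separate Cohen-Macaulayness statement for the Noetherian residue-field tensor product $\kappa(\mfp) \otimes_\kk \kappa(\mathfrak{q})$, which is handled by the observation that localizations of such rings at their primes are Artinian local, hence trivially Cohen-Macaulay. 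Once the dimensions and depths are reconciled, the regular sequence $\mathbf{x},\mathbf{y}$ achieves the full depth at $P$ and the theorem follows.
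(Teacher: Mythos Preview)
The paper does not prove this theorem; it merely quotes it from \cite{Bouchiba-Kabbaj-01} and uses it as a black box to establish Lemma~\ref{lem-localization}. So there is no proof in the paper to compare your attempt against.

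That said, your proposal contains a genuine error in the final paragraph. You claim that localizations of a Noetherian tensor product $\kappa(\mfp)\otimes_\kk\kappa(\mathfrak{q})$ at prime ideals are Artinian local. This is false: tensor products of field extensions over $\kk$ can have positive Krull dimension. For a concrete example, take $\kk$ any field and $K=\kk(x)$, $L=\kk(y)$ purely transcendental. Then $K\otimes_\kk L$ is the localization of $\kk[x,y]$ at the multiplicative set of products $f(x)g(y)$ with $f,g\neq 0$; the prime $(x-y)$ survives this localization (no such product lies in $(x-y)$, since evaluating at $x=y$ would force $f$ or $g$ to vanish), and the local ring at $(x-y)$ is a discrete valuation ring of dimension one, not Artinian.

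Your overall architecture is in fact the right one, and the fibers $\kappa(\mfp)\otimes_\kk\kappa(\mathfrak{q})$ \emph{are} Cohen--Macaulay when Noetherian, but establishing this is precisely the nontrivial core of the Bouchiba--Kabbaj argument (building on earlier work of Sharp and others on tensor products of fields). It does not follow from the zero-dimensionality you assert. So as written, your argument is circular: the step you wave away is essentially equivalent to the theorem itself in the special case where $A$ and $B$ are fields.
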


The rank selection lemma was proved by R.~Stanley in \cite[Theorem 4.3]{Stanley-01} for the Stanley-Resiner ring of balanced Cohen-Macaulay complexes. See also \cite[Theorem 6.4]{Baclawski-01}. Here we show that a similar result holds for flag ideals of graded posets as well.
 
\begin{lemma}[Rank selection for Cohen-Macaulayness] \label{pro-rankselection}
Let $P$ be a finite poset of rank $r$ such that $\mcF(P)$ is Cohen-Macaulay.
\begin{enumerate}
\item If $P$ is graded then for any subset $S = \{i,i+1,\ldots,i+j\}$ of $[r]$, $\mcF(P,S)$ is Cohen-Macaulay;
\item If $P$ is pure then for any subset $S\subseteq [r]$, $\mcF(P,S)$ is Cohen-Macaulay.
\end{enumerate}
\end{lemma}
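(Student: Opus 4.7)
My strategy for both parts is to apply Lemma \ref{lem-localization} iteratively, setting $x_p = 1$ for each $p \in P \setminus P_S$ one variable at a time. Cohen-Macaulayness is preserved at every step, so the resulting ideal $I'$ in $\kk[x_{P_S}]$ is Cohen-Macaulay, and after all substitutions it is generated by $\{x_{\bfc \cap P_S} : \bfc \text{ a maximal chain of } P\}$. The crux is to identify this ideal with $\mcF(P, S)$.

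For part (2), since $P$ is pure every maximal chain $\bfc = \{q_1 < \cdots < q_r\}$ of $P$ has length $r$, so $\bfc \cap P_S = \{q_s : s \in S\}$ is nonempty. A rank analysis shows it is itself a maximal chain of $P_S$: consecutive elements of $S$ are separated by ranks outside $S$, so no $P_S$ element can be inserted between them, and the extremal elements are automatically minimal and maximal in $P_S$. Conversely, pureness forces any maximal chain of $P_S$ to have elements of exactly the ranks in $S$, and such a chain extends to a maximal chain of $P$ by filling in ancestors below $\min S$, intermediate elements between consecutive ranks of $S$ (via the graded Jordan-Dedekind property), and descendants above $\max S$ (using that every element of a pure poset lies on some length-$r$ maximal chain). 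Hence $I' = \mcF(P, S)$, which is Cohen-Macaulay.

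For part (1), the same analysis shows that every nonempty intersection $\bfc \cap P_S$ is a maximal chain of $P_S$ and conversely that every maximal chain of $P_S$ arises this way; however, $P$ may now admit \emph{orphan} maximal chains, namely those ending at a maximal element of $P$ of rank $< i$, whose intersection with $P_S$ is empty. Such a chain contributes the generator $1$ to $I'$ and breaks the identification with $\mcF(P, S)$. To handle this I decompose $P$ into connected components: a component $C$ with $\overline{r}(C) < i$ meets $P_S$ trivially and contributes nothing to $\mcF(P, S)$, so by the tensor product decomposition of $R/\mcF(P)$ together with the Bouchiba-Kabbaj theorem it suffices to prove $\mcF(C, S)$ is Cohen-Macaulay for each connected component $C$ of $P$ with $\overline{r}(C) \geq i$. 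For each such active $C$ one then argues that $C$ must be pure, so that part (2) applies directly.

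The principal obstacle is this final purity claim, that a connected graded poset with Cohen-Macaulay flag ideal is necessarily pure. I expect it to follow from the structural constraints of Theorem \ref{thm-unmixedness}: in a connected poset any maximal element at an intermediate rank produces a short maximal chain which, combined with the rank-monotone pairing of conditions (1)-(2), forces a configuration of saturated chains incompatible with conditions (3)-(4) and hence contradicts unmixedness (and therefore Cohen-Macaulayness) of $\mcF(C)$. Once this purity is in hand, parts (1) and (2) together complete the proof.
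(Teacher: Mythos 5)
Your overall strategy (set the variables of $P\setminus P_S$ equal to $1$ and invoke Lemma \ref{lem-localization}) is exactly the paper's, and your treatment of part (2) is correct -- indeed you supply the chain-intersection details that the paper leaves implicit. Your worry about part (1) is also legitimate: if $P$ has a maximal element of rank $<i$, the corresponding generator of $\mcF(P)$ specializes to $1$, the substituted ideal becomes the unit ideal, and the identification with $\mcF(P,S)$ breaks down (the paper's two-line proof passes over this point in silence).

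The problem is your proposed repair. Its load-bearing step is the claim that a connected graded poset with Cohen--Macaulay flag ideal must be pure, and this claim is false. Take $P=\{a_1,a_2,b_1,b_2,c\}$ with cover relations $a_1<b_1$, $a_2<b_1$, $a_2<b_2$, $b_1<c$. This poset is connected and graded (ranks $1,1,2,2,3$) but not pure, since $b_2$ is a maximal element of rank $2$ while $c$ has rank $3$. Its flag ideal is $\mcF(P)=(x_{a_1}x_{b_1}x_c,\,x_{a_2}x_{b_1}x_c,\,x_{a_2}x_{b_2})$, whose minimal vertex covers are exactly $\{a_1,a_2\},\{a_2,b_1\},\{a_2,c\},\{b_1,b_2\},\{b_2,c\}$; hence $\mcF(P)^A$ is the edge ideal of a graph on five vertices whose complement is chordal, so $\mcF(P)^A$ has a linear resolution and $\mcF(P)$ is Cohen--Macaulay by Eagon--Reiner (one can also check the conditions of Theorem \ref{thm-mainCM} directly with the chains $\{a_2<b_2\}$ and $\{a_1<b_1<c\}$). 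So ``active'' components need not be pure, Theorem \ref{thm-unmixedness} cannot force purity (this $P$ satisfies all its conditions), and your reduction of part (1) to part (2) collapses; note you yourself only sketched this step (``I expect it to follow''), so part (1) remains unproved in your write-up. Incidentally, in this example the orphan-chain phenomenon you identified really occurs (take $S=\{3\}$: the chain $a_2<b_2$ specializes to $1$), so closing part (1) requires a genuinely different argument from the naive substitution, not a purity reduction.
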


\begin{proof}
In both cases, let $A=\{p\in P|r_P(p)\notin S\}$. The ideal $\mcF(P,S)$ is obtained from $\mcF(P)$ by substituting all the variables in $A$ by $1$ in its generators. The assertion now follows from Lemma \ref{lem-localization}.
\end{proof}

\begin{lemma}
Let $P$ be a finite poset. If $\mcF(P)$ is Cohen-Macaulay then $P$ is unmixed.
In particular, if $P$ is a pure poset and $\mcF(P)$ is Cohen-Macaulay then the sets of elements of the same rank have the same cardinality.
\end{lemma}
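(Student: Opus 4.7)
The first assertion is essentially a translation of the standard algebraic implication ``Cohen-Macaulay $\Rightarrow$ unmixed in the sense that all associated primes have the same height'' into the combinatorial language of vertex covers. The plan is to note that, since $\mcF(P)$ is a squarefree monomial ideal, $\ass(R/\mcF(P)) = \min(\ass(R/\mcF(P)))$ (this is the opening observation of Section~\ref{sec-unmixed}). If $R/\mcF(P)$ is Cohen-Macaulay then every associated prime has height equal to $\height\mcF(P)$, so in particular all minimal primes of $\mcF(P)$ share the same height. Via the bijection between minimal primes of $\mcF(P)$ and minimal vertex covers of $P$ recorded after Definition~\ref{def-VertexCover}, where the height of the minimal prime equals the cardinality of the corresponding vertex cover, this immediately yields that all minimal vertex covers of $P$ have the same cardinality, which is precisely the combinatorial unmixedness of Definition~\ref{def-VertexCover}.

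For the ``in particular'' part, assume $P$ is pure of rank $r$. I would apply the rank selection lemma for Cohen-Macaulayness, Lemma~\ref{pro-rankselection}(2), with $S = \{i,i+1\}$ for each $1 \leq i \leq r-1$, concluding that $\mcF(P,\{i,i+1\})$ is Cohen-Macaulay. This partial flag ideal is exactly the edge ideal of the bipartite graph whose two color classes are $P_i$ and $P_{i+1}$ and whose edges are the covering relations of $P$ between these two ranks. Because $P$ is pure and graded, every element of $P_i$ (for $i<r$) lies on some maximal chain that reaches rank $r$, hence has a child in $P_{i+1}$; and conversely every element of $P_{i+1}$ has a parent in $P_i$ along such a chain. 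Thus the bipartite graph $P_{i,i+1}$ has no isolated vertex.

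Finally I would invoke the Herzog--Hibi characterization of Cohen-Macaulay bipartite graphs recalled at the very beginning of Section~\ref{sec-CM}: condition (a) forces $|A|=|B|$, so $|P_i| = |P_{i+1}|$. Iterating over $i=1,\ldots,r-1$ gives the desired equality of all rank cardinalities.

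The only slightly delicate point in the plan is verifying that $P_{i,i+1}$ has no isolated vertex, so that the Herzog--Hibi theorem genuinely applies; this is exactly the place where the hypothesis ``pure'' is used, as opposed to merely ``graded''. If $P$ were graded but not pure, some element of $P_i$ could be a maximal element of $P$ and would appear as an isolated vertex of the bipartite graph $P_{i,i+1}$, breaking condition~(a) of Herzog--Hibi and invalidating the equality $|P_i|=|P_{i+1}|$. Everything else is a straightforward bookkeeping of results already established earlier in the paper.
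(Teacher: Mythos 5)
Your argument is correct, but it takes a genuinely different route from the paper on both halves. For the first assertion, the paper invokes the Eagon--Reisner criterion: Cohen-Macaulayness of $\mcF(P)$ makes the Alexander dual $\mcF(P)^A$ have a linear resolution, and since $\mcF(P)^A$ is minimally generated by the monomials $\prod_{p\in C}x_p$ over minimal vertex covers $C$, all such covers have equal size. You instead use the classical fact that a Cohen--Macaulay quotient is equidimensional, so all (necessarily minimal) associated primes of $R/\mcF(P)$ have the same height, and then translate via the bijection with minimal vertex covers; this is equally valid and arguably more elementary, relying only on standard commutative algebra rather than Alexander duality. For the ``in particular'' statement, the paper's proof is a one-liner: in a pure poset every maximal chain meets each rank level in exactly one element, so each $P_i$ is itself a minimal vertex cover, and unmixedness immediately forces $|P_1|=\cdots=|P_r|$. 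Your route via rank selection (Lemma \ref{pro-rankselection}) applied to $S=\{i,i+1\}$ together with the Herzog--Hibi characterization of Cohen--Macaulay bipartite graphs also works --- and you correctly handle the only delicate point, namely that purity rules out isolated vertices of $P_{i,i+1}$ so that $\mcF(P,\{i,i+1\})$ really is the edge ideal of a bipartite graph with $|P_i|=|P_{i+1}|$ --- but it uses considerably heavier machinery (Cohen--Macaulay rank selection and an external classification theorem) where only the already-proved unmixedness is needed. Noticing that the rank levels are minimal vertex covers would shorten your second step and keep the lemma self-contained.
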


\begin{proof}
If $\mcF(P)$ is Cohen-Macaulay then its Alexander dual has a linear resolution. The Alexander dual is minimally generated by $\prod_{p\in C} x_p$ where $C$ is a minimal vertex cover. This shows that all the minimal vertex covers of $P$ have the same cardinality.
For the second part, note that for any $1\leq i\leq \overline{r}(P)$, the elements of rank $i$ form a minimal vertex cover of $P$.
\end{proof}



Let $P$ be a pure poset of rank $r$.
If $\mcF(P)$ is Cohen-Macaulay then for $i=1,\ldots,r-1$, the Hasse diagram of $P_{i,i+1}$ is a Cohen-Macaulay bipartite graph but not conversely.
A complete characterization of Cohen-Macaulay bipartite graphs is given in \cite[Theorem 9.1.13]{Herzog-Hibi-01}.

\begin{example} Consider the poset $P$ in Example \ref{exp-NotUnmixed}.
The Hasse diagram of each $P_{i,i+1}$ is a Cohen-Macaulay bipartite graphs but $\mcF(P)$ is not Cohen-Macaulay.
\end{example}

\begin{definition}
Let $R = \kk[x_1,\ldots,x_n]$ be a polynomial ring.
Following \cite{Kokubo-Hibi-01}, a monomial ideal $I\subseteq R$ is called {\it weakly polymatroidal} if
\begin{enumerate}
\item $I$ is generated in a single degree;
\item if for two minimal generators $m_1 = x_1^{a_1}\cdots x_n^{a_n}$ and $m_2=x_1^{b_1}\cdots x_n^{a_n}$ of $I$, there exists an integer $t$ with
\[
a_1=b_1, \ldots, a_{t-1}=b_{t-1}, \text{ and } a_t>b_t,
\]
then there exists some $s>t$ such that $x_t(m_2/x_s)$ is also a minimal generator of $I$.
\end{enumerate} 
\end{definition}

\setcounter{claim}{0}
\begin{theorem}
\label{thm-mainCM}
Let $P$ be a graded poset of rank $r$. The ideal $\mcF(P)$ is Cohen-Macaulay if and only if $P$ satisfies the following conditions.

\begin{enumerate}
\item $|P_1| = |\max(P)|$;
\item Let $|P_1|=t_1$, $P_1 = \{a_1^1,\ldots,a_{t_1}^1\}$ and $\max(P) = \{b_1,\ldots,b_{t_1}\}$.
The poset $P$ contains disjoint maximal chains
\[
\bfa_i = \{a_i^1,\ldots,a_i^{r_i} = b_i\}
\]
such that $\cup_{i=1}^{t_1} \bfa_i$ equals the ground set of $P$.
\item For $i<j$ if $\{a^i_u < a^{i+1} < \cdots < a^{j-1} < a^j_v\}$ and $\{a^i_v < b^{i+1} < \cdots < b^{j-1} < a^j_w\}$ are two saturated chains in $P$ (where $a^i_u, a^i_v \in P_i$ and $a^j_v,a^j_w \in P_j$), then there is a saturated chain $\{a^i_u < c^{i+1} < \cdots <c^{j-1} <a^j_w\}$ such that 
\[
\{c^{i+1},\ldots,c^{j-1}\} \subseteq \{a^{i+1} , \cdots , a^{j-1} \} \cup \{b^{i+1} , \cdots , b^{j-1}\}.
\]
\item For $i<j<k$, if $\{a^i_u < a^{i+1} < \cdots < a^{k-1} < a^k_v\}$ and $\{a^i_v < b^{i+1} < \cdots < b^{j-1} < a^j_w\}$ are two saturated chains in $P$ (where $a^i_u, a^i_v \in P_i$, $a^k_v \in P_k$ and $a^j_w \in P_j\cap \max(P)$), then there is a saturated chain $\{a^i_u < c^{i+1} < \cdots <c^{j-1} <a^j_w\}$ such that 
\[
\{c^{i+1},\ldots,c^{j-1}\} \subseteq \{a^{i+1} , \cdots , a^{j-1} \} \cup \{b^{i+1} , \cdots , b^{j-1}\}.
\]
\item For any $1\leq i\leq r-1$, if $a^i_u < a^{i+1}_v$ then $u \leq v$.

\end{enumerate}
\end{theorem}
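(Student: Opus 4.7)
The plan is to reduce to the bipartite case and invoke the Herzog--Hibi characterization together with the rank selection Lemma \ref{pro-rankselection} and the unmixedness characterization in Theorem \ref{thm-unmixedness}, and to conclude the converse direction via the Eagon--Reiner theorem applied to the Alexander dual $\mcF(P)^A$ using the weakly polymatroidal framework just introduced.

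For the $(\Rightarrow)$ direction, note first that Cohen--Macaulayness of $\mcF(P)$ forces unmixedness by the lemma preceding this theorem, so conditions (3) and (4) are immediate from Theorem \ref{thm-unmixedness}. By Lemma \ref{pro-rankselection} each partial flag ideal $\mcF(P,\{i,i+1\})$ is Cohen--Macaulay, and setting to $1$ the variables corresponding to isolated maximal vertices of rank $i$ (Lemma \ref{lem-localization}) forces the edge ideal of the bipartite graph $Q_{i,i+1}$ to be Cohen--Macaulay. Herzog--Hibi then yields a labeling establishing a bijection $P_i\setminus\max(P) \leftrightarrow P_{i+1}$, so $|P_{i+1}| = |P_i| - |P_i\cap\max(P)|$; telescoping this identity gives $|P_1| = |\max(P)|$, which is condition (1). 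Propagating these labelings upward from an arbitrary labeling of $P_1$ produces the disjoint maximal chains $\bfa_u$ required in condition (2), while condition (5) emerges from Herzog--Hibi's monotonicity requirement (c) at each level.

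The technical subtlety in the forward direction is making the labelings of $Q_{i,i+1}$ and $Q_{i+1,i+2}$ compatible so that a single chain index $u$ is meaningful across all ranks. I would handle this by induction on the rank: after labeling $P_{i+1}$ via the chain indices inherited from $P_i$, compressing the index set to $\{1,\ldots,|P_{i+1}|\}$ preserves relative order, so Herzog--Hibi's conclusion can be reapplied at the next level without destroying (c).

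For the converse, I will use the Eagon--Reiner theorem: $\mcF(P)$ is Cohen--Macaulay if and only if $\mcF(P)^A$ has a linear resolution. Conditions (1) and (2) identify the minimal generators of $\mcF(P)^A$ as products $\prod_u x_{a_u^{k_u}}$, where $(k_1,\ldots,k_{t_1})$ picks one element from each chain subject to hitting every maximal chain of $P$. I will equip the polynomial ring with a variable order that first sorts by rank and then by chain index, and verify that $\mcF(P)^A$ is weakly polymatroidal with respect to this order; by Kokubo--Hibi \cite{Kokubo-Hibi-01} this gives linear quotients and hence a linear resolution. The hard part will be verifying the exchange property: given two minimal generators first differing at some variable, conditions (3), (4), and (5) must be combined to produce the saturated chain certifying that the exchanged monomial still corresponds to a minimal vertex cover of $P$. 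This combinatorial verification, together with the labeling-compatibility argument above, is the main technical obstacle.
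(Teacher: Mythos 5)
There are two genuine gaps, one in each direction. In the forward direction, your derivation of condition (5) does not work as stated. Herzog--Hibi applied to $Q_{i+1,i+2}$ only guarantees the \emph{existence} of some labeling of $B_{i+1}$ and $P_{i+2}$ satisfying (a)--(c); it says nothing about the particular labeling of $P_{i+1}$ that you inherited from the matching at level $i$, and your one-sentence justification (``compressing the index set preserves relative order, so Herzog--Hibi's conclusion can be reapplied without destroying (c)'') begs exactly this question -- preserving relative order is irrelevant if the inherited order is not a Herzog--Hibi order for the next graph at all. Worse, starting ``from an arbitrary labeling of $P_1$'' cannot yield (5), since already at level 1 an arbitrary labeling violates (c). The paper sidesteps level-by-level propagation entirely: it applies Lemma \ref{lem-localization} to the subposet $Q$ on $P_1\cup\max(P)$ (setting all intermediate variables to $1$), so the bipartite comparability graph between the bottom and the top is Cohen--Macaulay, then uses the fact that the Cohen--Macaulay complex $\Delta_{\mcF(Q)}$ is connected in codimension $1$ to order $P_1$ and $\max(P)$ simultaneously via a chain of facets $F_i=\{b_1,\ldots,b_{t_1-i},a^1_{t_1-i+1},\ldots,a^1_{t_1}\}$; condition (5) then follows because $a^i_u\leq a^{i+1}_v$ with $u>v$ would give $a^1_u\leq b_v$ by composing with the chains of (2), contradicting that the complement of $F_v$ is a minimal vertex cover. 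If you want a level-by-level argument you must prove compatibility of the labelings from the global Cohen--Macaulay hypothesis, not merely from Cohen--Macaulayness of the consecutive bipartite graphs.

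In the converse direction your plan coincides with the paper's (show $\mcF(P)^A$ is weakly polymatroidal for the order ``rank first, then chain index,'' then invoke Kokubo--Hibi and Eagon--Reiner), but you explicitly defer the exchange-property verification, and that is where essentially all the work lies. The paper's proof first establishes a bijection between minimal vertex covers of $P$ and filtrations $\emptyset\subseteq\mcJ_r\subseteq\cdots\subseteq\mcJ_1=Q$ in which each $\mcJ_i$ is a poset filter of the Herzog--Hibi poset $Q_{i-1,i}$ attached to the consecutive bipartite graph, and then performs the exchange by modifying such a filtration (inserting an element $a_j$ into $\mcJ_{i'},\ldots,\mcJ_{i}$ and checking the filter property using statements matching your conditions (3)--(5)). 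Without this correspondence and the explicit exchange step, the proposal is an outline of the paper's strategy rather than a proof.
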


\begin{proof}
$(\Rightarrow)$
Since $\mcF(P)$ is Cohen-Macaulay, $P$ is unmixed and the first $4$ conditions follow from Theorem \ref{thm-unmixedness}.


Without loss of generality we can assume that $P$ does not have any maximal element of rank $1$.
Let $Q$ be the the subposet of $P$ on the vertex set $P_1 \cup \max(P)$.
By Lemma \ref{lem-localization}, $Q$ is a Cohen-Macaulay bipartite graph with the given partition.

The sets $P_1$ and $\max(P)$ are both facets of the Cohen-Macaulay simplicial complex $\Delta_{\mcF(Q)}$, where $\Delta_{\mcF(Q)}$ denotes the Stanley-Reisner complex of the ideal $\mcF(Q)$.
By \cite[Lemma 9.1.12]{Herzog-Hibi-01}, $\Delta_{\mcF(Q)}$ is connected in codimension $1$. Therefore, there is a sequence of factes $\max(P) = F_0,\ldots,F_k = P_1$ such that $|F_i\cap F_{i+1}| = t_1-1$.
We can assume $F_1 = \{b_1,\ldots,b_{t_1-1},a^1_{t_1}\}$.
A similar argument to the proof of \cite[Theorem 9.1.13]{Herzog-Hibi-01}, enables us to order the elements of $P_1$ and $\max(P)$ such that for each $i=0,\ldots,t_1$,
\[
F_i = \{b_1,\ldots,b_{t_1-i},a^1_{t_1-i+1},\ldots,a^1_{t_1}\}
\]
is a facet of $\Delta_{\mcF(Q)}$.
Observe that $F_0 = \max(P)$ and $F_t = P_1$.

We reorder the elements of the poset $P$ accordingly.
If for some $1\leq i\leq r-1$, $a^i_u \leq a^{i+1}_v$ and $u>v$, then the maximal chain
\[
\{a^1_u < \cdots < a^i_u < a^{i+1}_v < a^{i+2}_v < \cdots < a_v^{r_v}\}
\]
implies that $a^1_u \leq b_v$. This contradicts with the fact that the complement of $F_v$ is a minimal vertex cover.
This completes the proof of the only if part

\medskip
$(\Leftarrow)$
Conversely, suppose $P$ satisfies the conditions $(1)-(5)$.
By Theorem \ref{thm-unmixedness}, $(1)-(4)$ implies that $\mcF(P)$ is unmixed.
We show that the Alexander dual of $\mcF(P)$ has a linear resolution. This implies that $\mcF(P)$ is Cohen-Macaulay.

Note that for any $1\leq i \leq r-1$, the induced subposet of $P_{i,i+1}$ on the vertex set $(P_i\setminus \max(P))\cup P_{i+1}$ is a Cohen-Macaulay bipartite graph and by \cite[Theorem 9.1.13]{Herzog-Hibi-01}, there is a poset $Q_{i,i+1}$ such that $P_{i,i+1} = L(2,Q_{i,i+1})$.
Let $|P_i|=t_i$ for $i=1,\ldots,r$.
By $(2)$, for $i=1,\ldots,r-1$, $P_i = \{a_{j_1}^i,\ldots,a^i_{j_{t_i}}\}$ where $1\leq j_1 < \cdots<j_{t_i} \leq t_1$.
We assume that the poset $Q_{i,i+1}$ has the set $Q_i = \{a_{j_1},\ldots,a_{j_{t_{i+1}}}\} \subseteq \{a_1,\ldots,a_{t_1}\}$ as its ground set.
We denote the ground set of $Q_1$, i.e. the set $\{a_1,\ldots,a_{t_1}\}$ by $Q$.

Let $C$ be a minimal vertex cover of $P$.
Let $\mcJ_1 = Q$.
For each $2\leq i\leq r$, we define the subsets $\mcJ_{i}$ of $Q$ inductively as follows.
\[
\mcJ_i = \{a_j \in \mcJ_{i-1}\cap Q_i ~|~ \exists a_k \in \mcJ_{i-1} \text{ s.t. } a_k \leq a_j \text{ in } Q_{i-1,i} \text{ and } a_k^{i-1} \notin C\}.
\]
Each $\mcJ_i$ is a poset filter in $Q_{i-1,i}$.
Suppose $a_j\in \mcJ_i$ and $a_{j} \leq a_{j'}$ in $Q_{i-1,i}$.
By definition $a_j\in \mcJ_{i-1}$ and there is some $a_{k_1} \in \mcJ_{i-1}$ such that $a_{k_1}\leq a_j$ in $Q_{i-1,i}$ and $a_{k_1}^{i-1}\notin C$.
Suppose $a_j^{i-1}\in C$. Since $a_{k_1} \in \mcJ_{i-1}$, we can inductively construct a saturated chain $\{a^1_{k_{i-1}},\ldots, a^{i-1}_{k_1}\}$ that does not intersect $C$.
Now the chain $D =\{a_{k_{i-1}}^1 < \cdots < a_{k_1}^{i-1} < a_j^{i}< \cdots <a_j^r \}$ is a maximal chain in $P$ that does not intersect $C$ which is a contradiction. Therefore, $a_j^{i-1}$ does not lie in $C$ and this implies that $a_{j'}\in \mcJ_i$ by definition. Hence $\mcJ_i$ is a poset filter.

Therefore, we have a sequence
\[
\mcJ : \emptyset \subseteq \mcJ_r \subseteq \cdots \subseteq \mcJ_1 = Q
\]
such that for $i=2,\ldots,r-1$, each $\mcJ_i$ is a poset filter in $Q_{i-1,i}$.
Now define a map $\phi_\mcJ : Q  \to [r]$ that maps
\begin{align*}
\mcJ_r  & \mapsto r\\
\mcJ_{r-1} \backslash \mcJ_r & \mapsto r-1\\
\vdots\\
\mcJ_1 \setminus \mcJ_2 & \mapsto 1
\end{align*}

\begin{claim}
The assignment $(a_i,\phi_\mcJ(a_i)) \mapsto a_i^{\phi_\mcJ(a_i)}$ gives a one-to-one correspondence between the graph of $\phi_\mcJ$ and the vertex cover $C$.
\end{claim}

\begin{proof}[Proof of claim]
Let $a_i$ be an element of $Q$. If $\phi_\mcJ(a_i) = j$, where $1\leq j\leq r-1$, then $a_i\in \mcJ_{j}\backslash \mcJ_{j+1}$.
Indeed, $a_i \leq a_i$ in $Q_{j,j+1}$. If $a_i^{j} \notin C$ then by definition $a_i\in \mcJ_{j+1}$ which is against our assumption. Hence $a_i^{j} \in C$.
Now suppose $\phi_\mcJ(a_i) = r$, i.e. $a_i\in \mcJ_r$.
In this case, suppose $a_i^r\notin C$. Let $a_{i_1}=a_i$.
Since $a_{i_1}\in \mcJ_r$, there exists some $a_{i_2}\in \mcJ_{r-1}$ such that $a_{i_2}\leq a_{i_1}$ in $Q_{r-1,r}$ and $a_{i_2}^{r-1}\notin C$. Inductively, we can find elements $a_{i_j}$, $j=1,\ldots,r$ such that $a_{i_j}^{r-j+1}$ is not an element of $C$ and $\{a_{i_r}^1 < \cdots <a_{i_1}^r\}$ forms a maximal chain in $P$.
This contradicts with the fact that $C$ is a vertex cover. Therefore, we see that if for some $a_i\in Q$, $\phi_\mcJ(a_i) = r$ then $a_i^r\in C$.
The assertion now follows from the fact that $|Q| = |C|$.
\end{proof}

So far we have shown that any minimal vertex cover $C$ gives a sequence of poset filters in the posets $Q_{1,2},\ldots,Q_{r-1,r}$.
Now we show that any such sequence gives a minimal vertex cover of $P$.
Let
\[
\mcJ : \emptyset \subseteq \mcJ_r \subseteq \cdots \subseteq \mcJ_1 = Q
\]
be a filtration of $Q$ such that $\mcJ_i$ is a poset filter in $Q_{i-1,i}$.
We show that
\[C=\{ a_1^{\phi_\mcJ(a_1)},\ldots,a_{t_1}^{\phi_\mcJ(a_{t_1})}\}\]
is a minimal vertex cover of $P$.
Let $D=\{a_{i_1}^1 < \cdots < a_{i_r}^r\}$ be a maximal chain in $P$ that does not intersect $C$.
Observe that $a_{i_1}^1 \notin C$ implies that $\phi_\mcJ(a_{i_1}) \neq 1$. Hence $a_{i_1} \notin \mcJ_1 \backslash \mcJ_{2}$, i.e. $a_{i_1}\in \mcJ_{2}$. Since $a_{i_{1}} \leq a_{i_2}$ in $Q_{1,2}$ and $\mcJ_{2}$ is a poset filter in $Q_{1,2}$, we have $a_{i_{2}}\in \mcJ_{2}$.
By induction we can show that $a_{i_r}\in \mcJ_r$, and by definition of $\phi_\mcJ$, $a_{i_r}^r \in C$ which is a contradiction. This implies the desire conclusion that $C$ is a vertex cover. Observe that $C$ is a minimal vertex cover since $|C| = t_1$.

Recall that the minimal generators of $\mcF(P)^A$ are in one-to-one correspondence with minimal vertex covers of $P$.

\begin{center}
\begin{tabular}{p{4.5cm}cp{5cm}}
$C =\{ a^{i_1}_1,\ldots,a^{i_{t_1}}_{t_1}\}$ is a minimal vertex cover of $P$
& $\Leftrightarrow$ &
$x_C = x_{a^{i_1}_1} \cdots x_{a^{i_{t_1}}_{t_1}}$ is a minimal generator of $\mcF(P)$
\end{tabular}
\end{center}

By above discussion, any minimal vertex cover is in one-to-one correspondence with filtrations of $Q$,
\[
\mcJ : \emptyset \subseteq \mcJ_r \subseteq \cdots \subseteq \mcJ_1 = Q
\]
such that for $i=2,\ldots,r$, the set $\mcJ_i$ is a poset filter of $Q_{i-1,i}$.
Let $\mcJ(P)$ be the set of all such filtrations.

We assign to any element $\mcJ$ of $\mcJ(P)$, the monomial $m_\mcJ = x_C$ in the polynomial ring $S=\kk[x_P]$, where $C=\{ a_1^{\phi_\mcJ(a_1)},\ldots,a_{t_1}^{\phi_\mcJ(a_{t_1})}\}$.
By the observations above
\[
\mcF(P)^A = <m_\mcJ ~|~ \mcJ \in \mcJ(P)>.
\]

\begin{claim}
The ideal $\mcF(P)^A = <m_\mcJ ~|~ \mcJ \in \mcJ(P)>$ is weakly polymatroidal.
\end{claim}

\begin{proof}[Proof of claim]
In the following proof we denote a variable $x_{a^i_j}$ by $a^i_j$ for simplicity.
We assume that $a^i_j > a^{i'}_{j'}$ if
\begin{enumerate}
\item[i.] $i > i'$, or
\item[ii.] $i=i'$ and $j > j'$.
\end{enumerate}
This gives us a variable order on the polynomial ring $S$ and we show that $\mcF(P)^A$ is weakly polymatroidal with respect to this variable order.

Let $\mcJ$ and $\mcI$ be two filtrations in $\mcJ(P)$.
Suppose for all $a_{j'}^{i'} > a_{j}^i$, $\deg_{a_{j'}^{i'}} m_{\mcI} = \deg_{a_{j'}^{i'}} m_{\mcJ}$ and $\deg_{a_{j}^{i}} m_{\mcI} > \deg_{a_{j}^{i}} m_{\mcJ}$.

We show that
\begin{enumerate}
\item $\mcI_{i'} = \mcJ_{i'}$ for all $i'>i$;
\item $a_{j'} \in \mcI_{i} \Leftrightarrow a_{j'} \in \mcJ_i$ for all $j'>j$.
\end{enumerate}
Let $i'=r$. Note that $a_k \in \mcI_r$ (resp. $a_k\in \mcJ_r$) if and only if $\deg_{a_k^r} m_\mcI = 1$
(resp. $\deg_{a_k^r} m_\mcJ = 1$). In this case the assertion in $(1)$ follows from the equality $\deg_{a_k^r} m_\mcI = \deg_{a_k^r} m_\mcJ$.
Now suppose $i' > i$ and for all $i''>i'$ we have $\mcI_{i''} = \mcJ_{i''}$.
We have $a_k \in \mcI_{i'}\backslash \mcI_{i'+1}$ (resp. $a_k\in \mcJ_{i'}\backslash\mcJ_{i'+1}$) if and only if $\deg_{a_k^{i'}} m_\mcI = 1$
(resp. $\deg_{a_k^{i'}} m_\mcJ = 1$) and the assertion follows similarly.

Now suppose $a_{j'}\in \mcI_{i}$. If $a_{j'}\in \mcI_{i+1}$ then by $(1)$, $a_{j'}\in \mcJ_{i+1} \subseteq \mcJ_{i}$.
Otherwise if $a_{j'}\in \mcI_{i}\backslash \mcI_{i+1}$ then $\deg_{a_{j'}^i} m_\mcI = 1$. Since  $\deg_{a_{j'}^i} m_\mcI = \deg_{a_{j'}^i} m_\mcJ$, we have $a_{j'}\in \mcJ_{i}\backslash \mcJ_{i+1}$.
Conversely, we can show that $a_{j'}\in \mcJ_{i}$ implies $a_{j'}\in \mcI_i$. This completes the proof of statements $(1)$ and $(2)$ above.

By our assumption $\deg_{a_{j}^{i}} m_{\mcI} > \deg_{a_{j}^{i}} m_{\mcJ}$. This implies that $a_j \in \mcI_i \backslash \mcI_{i+1}$ and $a_j\notin \mcJ_i \backslash \mcJ_{i+1}$. Since $\mcI_{i+1} = \mcJ_{i+1}$, there exists some $i'<i$ such that $a_j\in \mcJ_{i'}\backslash \mcJ_{i'+1}$.
Consider the filtration
\[
\mcK : \emptyset \subseteq \mcJ_r \subseteq \cdots \subseteq \mcJ_{i+1}
\subseteq \mcJ_{i} \cup \{a_j\} \subseteq \cdots \subseteq \mcJ_{i'+1} \cup \{a_j\} \subseteq
\mcJ_{i'} \subseteq \cdots \subseteq \mcJ_1
\]
of $Q$.
We claim that $\mcK \in \mcJ(P)$.
We only need to show that for any $i' < k< i$, $\mcJ_k \cup \{a_j\}$ is a poset filter of $Q_{k-1,k}$.
Since $\mcI_k$ is a poset filter containing $a_j$, for any $a_{j'} > a_j$ in $Q_{k-1,k}$ we have $a_{j'}\in \mcI_k$. By $(1)$ and $(2)$, we see that $a_{j'} \in \mcJ_k$. Hence $\mcK\in \mcJ(P)$.

Now by construction we have $m_\mcK = a^i_j (m_\mcJ / a^{i'}_j)$ and this completes the proof that $\mcF(P)^A$ is weakly polymatroidal.
\end{proof}
By \cite[Theorem 1.4]{Kokubo-Hibi-01}, polymatroidal ideals have linear quotients. In particular, they have linear resolutions. 
\end{proof}

The above theorem introduces a huge class of Cohen-Macaulay ideals not necessarily generated in a single degree.

\begin{example}
Let $P$ be the following graded poset.
\begin{center}
\begin{tikzpicture}[scale=.75, vertices/.style={draw, fill=black, circle, inner sep=1pt}]
              \node [vertices, label=right:{${a}_{1}$}] (0) at (-3+0,0){};
              \node [vertices, label=right:{${b}_{1}$}] (4) at (-3+1.5,0){};
              \node [vertices, label=right:{${c}_{1}$}] (6) at (-3+3,0){};
              \node [vertices, label=right:{${d}_{1}$}] (7) at (-3+4.5,0){};
              \node [vertices, label=right:{${e}_{1}$}] (9) at (-3+6,0){};
              \node [vertices, label=right:{${a}_{2}$}] (1) at (-3+0,1.33333){};
              \node [vertices, label=right:{${b}_{2}$}] (5) at (-3+1.5,1.33333){};
              \node [vertices, label=right:{${c}_{2}$}] (2) at (-3+3,1.33333){};
              \node [vertices, label=right:{${d}_{2}$}] (8) at (-3+4.5,1.33333){};
              \node [vertices, label=right:{${e}_{2}$}] (3) at (-3+6,1.33333){};
              \node [vertices, label=right:{${a}_{3}$}] (10) at (-3,2.66667){};
              \node [vertices, label=right:{${b}_{3}$}] (11) at (-3+1.5,2.66667){};
              \node [vertices, label=right:{${d}_{3}$}] (12) at (-3+4.5,2.66667){};
              \node [vertices, label=right:{${e}_{3}$}] (13) at (-3+6,2.66667){};
              \node [vertices, label=right:{${b}_{4}$}] (14) at (-3+1.5,4){};
              \node [vertices, label=right:{${d}_{4}$}] (15) at (-3+4.5,4){};
      \foreach \to/\from in {0/1, 0/2, 0/3, 1/10, 1/11, 1/12, 1/13, 3/13, 4/5, 4/2, 4/3, 5/13, 5/11, 6/2, 6/3, 7/8, 8/12, 8/13, 9/3, 11/14, 11/15, 12/15}
      \draw [-] (\to)--(\from);
      \end{tikzpicture}
\end{center}
The conditions $(1)$ and $(4)$ are obviously satisfied. One can also show that the conditions $(2)$ and $(3)$ hold as well.
The ideal $\mcF(P)$ has $17$ generators and it can be checked by a computer algebra system like Macaulay2, \cite{Grayson-Stillman-01}, that it is a Cohen-Macaulay ideal.

\end{example}

\section{Multigraded Betti numbers of flag ideals}
\label{sec-multidegrees}

The multigraded Betti numbers of a squarefree monomial ideal $I$ can be computed from the (co)homology of restrictions of the Stanley-Reisner complex $\Delta_I$ via the Hochster formula.
Let $P$ be a finite graded poset and let $\mcF(P)$ be its flag ideal.
In this section we show that the restriction of the simplicial complex $\Delta_{\mcF(P)}$ to a multidegree is the join of simplicial complexes arising from bipartite graphs on elements of consecutive rank.
The results of this section are inspired by the work A. D'Al{\`i}, G. Fl{\o}ystad and the athor in \cite{DAli-Floystad-Nematbakhsh-01}.

The following simple lemma is proved as part of \cite[Proposition 3.1]{JuhnkeKubitzke-Katthan-SaeediMadani-01} for monomial ideals using lcm-lattices, but it also holds for polynomial ideals.
This lemma shows that computation of multigraded Betti numbers of a flag ideal can be reduced to its connected components.

\begin{lemma}
Let $R = \kk[x_1,\ldots , x_n, y_1, \ldots , y_m]$.
Let $I$ be an ideal in variables $x_1, \ldots , x_n$
and $J$ be an ideal in variables $y_1,\ldots , y_m$. Then the tensor product of minimal free resolutions of $R/I$ and $R/J$ is a minimal free resolution of $R/(I + J)$. In particular, for all $i \geq 0$, $A \subseteq \{x_1,\ldots,x_n\}$ 
and $B\subseteq \{y_1,\ldots,y_m\}$ we have
\[
\beta_{i, A \cup B}(R/I + J) = \sum_{j+k=i} \beta_{j, A}(R/I) \beta_{k, B}(R/J).
\]
\end{lemma}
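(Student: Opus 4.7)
The plan is to factor $R$ as the tensor product $A \otimes_{\kk} B$ with $A = \kk[x_1,\ldots,x_n]$ and $B = \kk[y_1,\ldots,y_m]$, and view $I, J$ as extensions of ideals $I_0 \subseteq A$ and $J_0 \subseteq B$. The crucial observation is then the isomorphism
\[
R/(I+J) \;\cong\; (A/I_0) \otimes_{\kk} (B/J_0)
\]
of $R$-modules, which reduces the statement to a Künneth-type argument.

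First I would take minimal multigraded free resolutions $F_{\bullet} \to A/I_0$ over $A$ and $G_{\bullet} \to B/J_0$ over $B$, and form the total complex $T_{\bullet} = F_{\bullet} \otimes_{\kk} G_{\bullet}$, whose $i$-th term is $\bigoplus_{j+k=i} F_j \otimes_{\kk} G_k$ with the usual sign-twisted differential $d_F \otimes 1 + (-1)^j \otimes d_G$. Each summand $F_j \otimes_{\kk} G_k$ is a free $R$-module, because the tensor product over $\kk$ of a free $A$-module and a free $B$-module is a free $A \otimes_{\kk} B = R$-module, and the multigrading on $T_{\bullet}$ is inherited from the two given resolutions.

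Next I would verify that $T_{\bullet}$ resolves $R/(I+J)$. Since $\kk$ is a field, $\otimes_{\kk}$ is exact, so the Künneth formula gives
\[
H_i(T_{\bullet}) \;=\; \bigoplus_{j+k=i} H_j(F_{\bullet}) \otimes_{\kk} H_k(G_{\bullet}),
\]
which vanishes for $i>0$ and equals $(A/I_0)\otimes_{\kk}(B/J_0) \cong R/(I+J)$ for $i=0$. Minimality is automatic: the entries of $d_F$ lie in the irrelevant ideal $(x_1,\ldots,x_n) \subseteq A$ and the entries of $d_G$ lie in $(y_1,\ldots,y_m) \subseteq B$, so the entries of the differential of $T_{\bullet}$ lie in the irrelevant maximal ideal of $R$. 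Hence $T_{\bullet}$ is the minimal free resolution of $R/(I+J)$.

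Finally, to read off Betti numbers I would compute $\Tor^R_i(\kk, R/(I+J))$ by tensoring $T_{\bullet}$ with $\kk$: since the resolution is minimal, the differentials vanish and one obtains
\[
\Tor^R_i(\kk, R/(I+J)) \;=\; \bigoplus_{j+k=i} \Tor^A_j(\kk, A/I_0) \otimes_{\kk} \Tor^B_k(\kk, B/J_0).
\]
Restricting to the multidegree $A \cup B$ (where $A \subseteq \{x_1,\ldots,x_n\}$ and $B \subseteq \{y_1,\ldots,y_m\}$), only the summand indexed by the splitting of the multidegree contributes, yielding the claimed formula. The only place requiring care is keeping the multigrading straight through the Künneth isomorphism and checking that no cross terms arise; this is where I expect the bookkeeping to be most delicate, but it follows since a multidegree supported on disjoint variable sets splits uniquely into its $x$- and $y$-parts.
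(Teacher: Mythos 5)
Your argument is correct, and it reaches the conclusion by a somewhat different mechanism than the paper. The paper tensors the two minimal resolutions over $R$ and invokes the K\"unneth spectral sequence, which reduces the claim to showing $\Tor^R_i(R/I,R/J)=0$ for all $i\geq 1$; that vanishing is then checked by noting that the differentials in a minimal free resolution of $R/I$ involve only the variables $x_1,\ldots,x_n$, so that applying $-\otimes_R R/J$ destroys exactness only in homological degree $0$. You instead resolve $A/I_0$ over $A=\kk[x_1,\ldots,x_n]$ and $B/J_0$ over $B=\kk[y_1,\ldots,y_m]$ and tensor over the field $\kk$, where the K\"unneth formula holds with no correction terms because $\otimes_\kk$ is exact; acyclicity of the total complex is then immediate, and you check minimality and the multidegree splitting explicitly. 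The two constructions yield the same complex, since the minimal $R$-resolution of $R/I$ is the minimal $A$-resolution of $A/I_0$ base-changed along the flat inclusion $A\to R$ (this flat base change is also the tacit step that lets you identify $\beta_{j,A}(R/I)$ with the Betti numbers of $A/I_0$ over $A$ in the final formula, and it deserves one sentence). What your route buys is elementarity and completeness: no spectral sequence, exactness for free over the base field, and the minimality and multigraded bookkeeping carried out rather than left implicit. What the paper's route buys is brevity and the isolation of the Tor-vanishing statement $\Tor^R_i(R/I,R/J)=0$, which is of independent use.
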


\begin{proof}
By K\"unneth spectral sequence it is enough to show that for all $i \geq 1$,
$\Tor^R_i (R/I, R/J) = 0$.
Let
\[
0 \to F_t \stackrel{\partial_t}{\longrightarrow} \cdots \stackrel{\partial_2}{\longrightarrow}  F_1 \stackrel{\partial_1}{\longrightarrow} F_0 \to 0
\]
be a minimal free resolution of $R/I$. Since any differential matrix $\partial_i$ consists of variables
in $x_1, \ldots , x_n$, the functor $- \otimes R/J$ preserves exactness of this sequence except in degree
$0$. This shows that all the torsion modules vanish.
\end{proof}


\medskip
Let $X_{AB}$ be the simplicial complex on vertex set $A\cup B$ with Stanley-Reisner ideal $I_{AB}$.
Similarly, let $X_{B_0C}$ be a simplicial complex on vertex set $B_0\cup C$ where $B_0\subseteq B$.
We denote the Stanley-Resiner ideal of $X_{B_0C}$ by $I_{B_0C}$.
We also assume that no minimal generator of $I_{AB}$ and $I_{B_0C}$ is divisible by a monomial of form $x_{b_1}x_{b_2}$ where $b_1$ and $b_2$ are elements of $B$.
Let $X$ be a a simplicial complex on $A\cup B\cup C$ whose Stanley-Resiner ideal is generated by monomials
\begin{enumerate}
\item $x_\bfa \in I_{AB}$ with $\bfa \subseteq A$;
\item $x_\bfc \in I_{B_0C}$ with $\bfc \subseteq C$;
\item $x_\bfa x_b x_\bfc$ where $\bfa\subseteq A$, $b\in B_0$ and $\bfc\subseteq C$ such that $x_\bfa x_b\in I_{AB}$ and $x_bx_\bfc \in I_{B_0C}$;
\item $x_\bfa x_b \in I_{AB}$ where $\bfa \subseteq A$ and $b\in B\backslash B_0$.
\end{enumerate}
The simplicial complex $X$ consists of all subsets $\bfa \cup \bfb \cup \bfc$ (with $\bfa \subseteq A, \bfb \subseteq B$ and $\bfc\subseteq C$) such that
\begin{enumerate}
\item for all $b \in \bfb \cap B_0$, either i) $\bfa \cup \{b\} \in X_{AB}$ and $\bfc \in X_{B_0C}$,  or ii) $\{b\}\cup \bfc \in X_{B_0C}$ and $\bfa\in X_{AB}$;
\item for all $b\in \bfb \cap (B\backslash B_0)$, $\bfa \cup \{b\} \in X_{AB}$ and $\bfc \in X_{B_0C}$;
\end{enumerate}
or equivalently,
\begin{enumerate}
\item $\bfa \in X_{AB}$ and $\bfc\in X_{B_0C}$;
\item for all $b \in \bfb\cap B_0$, either $\bfa \cup \{b\} \in X_{AB}$, or $\{b\} \cup \bfc \in X_{B_0C}$;
\item for all $b\in \bfb \cap (B\backslash B_0)$, $\bfa \cup \{b\} \in X_{AB}$.
\end{enumerate}

\begin{theorem} \label{thm-towerjoin}
Let $X_{AB}, X_{B_0C}$ and $X$ be as above.
The join $X_{AB} \ast X_{B_0C}$ is homotopy equivalent to $X$.
\end{theorem}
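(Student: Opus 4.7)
The plan is to apply Quillen's Theorem~A to the monotone map of face posets
\[
\phi \colon F(Y)\setminus\{\emptyset\} \longrightarrow F(X)\setminus\{\emptyset\}, \qquad (F,G)\mapsto F\cup G,
\]
where $Y = X_{AB}\ast X_{B_0 C}$ so that $F(Y)\cong F(X_{AB})\times F(X_{B_0 C})$, and the union is taken inside $A\cup B\cup C$ by identifying $B_0\subseteq V(X_{AB})$ with its copy in $V(X_{B_0 C})$. First I would verify that $\phi$ is well defined and monotone: given $F\in X_{AB}$ and $G\in X_{B_0 C}$, writing $\bfa=F\cap A$ and $\bfc=G\cap C$, the union $F\cup G$ satisfies the two defining conditions of a face of $X$ because $\bfa\in X_{AB}$, $\bfc\in X_{B_0 C}$, every $b\in (F\cup G)\cap (B\setminus B_0)$ lies in $F$ and gives $\bfa\cup\{b\}\in X_{AB}$, and every $b\in (F\cup G)\cap B_0$ lies in $F$ or $G$, producing one of the two alternatives.

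The main step is to verify the hypothesis of Quillen's Theorem~A: for every $\sigma=\bfa\cup\bfb\cup\bfc\in X$, the upward fiber
\[
\sigma\backslash\phi := \{(F,G)\in F(Y) : F\cup G\supseteq\sigma\}
\]
has contractible order complex. Set
\[
A'=\{b\in B_0:\bfa\cup\{b\}\in X_{AB}\}, \quad C'=\{b\in B_0:\{b\}\cup\bfc\in X_{B_0 C}\}, \quad K=\bfb\cap B_0;
\]
the hypothesis $\sigma\in X$ reads $K\subseteq A'\cup C'$. I would first deformation-retract $\sigma\backslash\phi$ onto its subposet $\tilde P_\sigma$ in which the $A$-part of $F$ and the $C$-part of $G$ are forced down to their minimal values $\bfa$ and $\bfc$; the assignment $(F,G)\mapsto(\bfa\cup(F\setminus A),\,\bfc\cup(G\setminus C))$ is monotone, lies below the identity, and fixes the target subposet, hence gives a homotopy equivalence of order complexes. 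Using the hypothesis that no minimal generator of $I_{AB}$ or $I_{B_0 C}$ is divisible by $x_{b_1}x_{b_2}$ with $b_1,b_2\in B$, the poset $\tilde P_\sigma$ factors as the direct product of a Boolean lattice (for the free choice of the $(B\setminus B_0)$-part of $F$ inside the set of $b$ with $\bfa\cup\{b\}\in X_{AB}$) and the coupled piece
\[
P_\sigma := \{(F_0,G_0) : F_0\subseteq A',\ G_0\subseteq C',\ F_0\cup G_0\supseteq K\},
\]
ordered componentwise.

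I would then decompose $P_\sigma$ as a direct product of small posets, one factor per $k\in K$ and per free vertex of $A'\setminus K$ or $C'\setminus K$. Each $k\in K\cap(A'\triangle C')$ contributes a one-point factor (forced assignment); each $k\in K\cap A'\cap C'$ contributes the three-element poset with two minima (``only in $F_0$'', ``only in $G_0$'') and a single top element (``in both''), whose order complex is a path on three vertices; each free vertex contributes a two-element chain. Every factor has contractible order complex, and since geometric realization commutes with products of nerves of posets (so that $|\Delta(P\times Q)|\simeq|\Delta(P)|\times|\Delta(Q)|$), the order complex of $P_\sigma$ is a finite product of contractible spaces, hence contractible. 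Combining the retraction and the product decomposition, $|\sigma\backslash\phi|$ is contractible, and Quillen's Theorem~A yields $X_{AB}\ast X_{B_0 C}\simeq X$ after identifying the order complexes of the face posets with the corresponding barycentric subdivisions.

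The hard part will be carrying out the deformation retraction to $\tilde P_\sigma$ and then the clean product decomposition of $P_\sigma$; both steps rely crucially on the no-two-$B$-generators hypothesis, which is what lets us replace the face condition $\bfa\cup S\in X_{AB}$ for $S\subseteq B$ by the pointwise conditions $\bfa\cup\{b\}\in X_{AB}$ for each $b\in S$ (and symmetrically on the $X_{B_0 C}$ side). Without this hypothesis the fibers need not split as products of contractible factors, so a more delicate argument would be required.
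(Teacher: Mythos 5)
Your argument is correct, but it proves the theorem by a genuinely different route than the paper. The paper doubles the glued vertices: it introduces disjoint copies $b'_i,b''_i$ of each $b_i\in B_0$, so that the first complex $X_0$ in a chain $X_0,X_1,\ldots,X_p$ is literally the join $X_{AB'}\ast X_{B''_0C}$ and the last is $X$, and then it merges the two copies of one vertex at a time, checking at each step the three conditions of the vertex-identification criterion (Proposition 1 of D'Al\`i--Fl{\o}ystad--Nematbakhsh) to conclude $X_i\simeq X_{i+1}$; the no-quadratic-$B$-generator hypothesis enters when verifying that $\bfa\cup\bfb'_{>i+1}\cup\{b'_{i+1}\}$ is still a face. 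You instead apply Quillen's fiber theorem once, to the natural union map from the face poset of the join to that of $X$, and show each upper fiber over $\sigma=\bfa\cup\bfb\cup\bfc$ is contractible: the kernel-operator retraction $(F,G)\mapsto(\bfa\cup(F\setminus A),\,\bfc\cup(G\setminus C))$ is a legitimate order homotopy, and after it the fiber really does split as a product over the $B$-vertices of local posets (point, chain, square, or the two-minima-one-top poset), each with a maximum or minimum, hence contractible; the splitting is exactly where the hypothesis is used, since it converts $\bfa\cup S\in X_{AB}$ into the pointwise conditions $\bfa\cup\{b\}\in X_{AB}$, $b\in S$ (and symmetrically for $X_{B_0C}$), and the membership of $\sigma$ in $X$ guarantees the fibers are nonempty. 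Your approach buys a single global argument that exhibits an explicit map inducing the equivalence and avoids both the doubling construction and the external identification lemma, at the cost of invoking Quillen's Theorem~A and the standard facts about closure operators and products of order complexes; the paper's induction is more elementary in its ingredients and stays closer to the machinery already used elsewhere in that reference, but requires constructing and checking the intermediate complexes $X_i$.
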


\begin{proof}
Let $B_0 = \{b_1,\ldots,b_p\}$ and let $B_0''=\{b''_1,\ldots,b''_p\}$ be a copy of $B_0$.
Similarly, let $B = \{b_1,\ldots,b_p,b_{p+1},\ldots,b_s\}$ and let $B' = \{b'_1,\ldots,b'_p,b_{p+1},\ldots,b_s\}$.
We get simplicial complexes on $X_{AB'}$ and $X_{B_0''C}$ which are copies of $X_{AB}$ and $X_{B_0C}$ respectively.
For $i=1,\ldots,p$, let $B_{\leq i}$ be the subset $\{b_1,\ldots,b_i\}$ of $B$.
For $i=0,\ldots,p-1$, we denote the subset $\{b'_{i+1},\ldots,b'_p,b_{p+1},\ldots,b_s\} \subseteq B'$ by $B'_{>i}$. Similarly define $B''_{>i}$ to be the subset $\{b''_{i+1},\ldots,b''_{p}\}$ of $B''$.
We set $B_{\leq 0}$ and $B''_{>p}$ to be the empty set.

For $i=0,\ldots,p$, let $X_i$ be the simplicial complex on the vertex set
\[
A \cup B_{\leq i} \cup B'_{>i} \cup B''_{>i} \cup C
\]
consisting of all subsets
\[
\bfa \cup \bfb_{\leq i} \cup \bfb'_{>i} \cup \bfb''_{>i} \cup \bfc
\]
satisfying
\begin{enumerate}
\item $\bfa \cup \bfb'_{>i}$ is in $X_{AB'}$;
\item $\bfb''_{>i} \cup \bfc$ is in $X_{B''C}$;
\item $\bfa \cup \bfb_{\leq i} \cup \bfc$ is in $X$.
\end{enumerate}
It is not hard to show that each $X_i$ is a simplicial complex. Furthermore, $X_0$ is the join $X_{AB'} \ast X_{B''_0C}$ and $X_p = X$.
By using \cite[Proposition 1]{DAli-Floystad-Nematbakhsh-01}, we show that $X_i$ and $X_{i+1}$ are homotopy equivalent. The restriction of $X_{i}$ and $X_{i+1}$ to
\[
V_0 = (A \cup B_{\leq i} \cup B'_{>i} \cup B''_{>i} \cup C)\backslash \{b'_{i+1},b''_{i+1}\}
\]
is the same.
Let $R= \bfa \cup \bfb_{\leq i} \cup \bfb'_{>i+1} \cup \bfb''_{>i+1} \cup \bfc$ be a subset of $V_0$.
We must show that:
\begin{enumerate}
\item If $R \cup \{b'_{i+1}\}$ and $R \cup \{b''_{i+1}\}$ are in $X_i$, then $R\cup \{b'_{i+1},b''_{i+1}\}$ is in $X_i$.

\noindent
If $R \cup \{b'_{i+1}\}$ is in $X_i$ then $\bfa \cup \bfb'_{>i+1} \cup \{b'_{i+1}\}$ is in $X_{AB'}$. Similarly, $R \cup \{b''_{i+1}\} \in X_i$ implies that $\bfb''_{>i+1} \cup \{b''_{i+1}\} \cup \bfc$ is in $X_{B_0''C}$.
It follows from the criteria above that $R\cup \{b'_{i+1},b''_{i+1}\}$ is also in $X_i$.
\item If $R\cup \{b'_{i+1}\}$ or $R\cup \{b''_{i+1}\}$ is in $X_i$, then $R\cup \{b_{i+1}\}$ is in $X_{i+1}$.

\noindent
Since $R\cup \{b'_{i+1}\}$ or $R\cup \{b''_{i+1}\}$ is in $X_i$, $\bfa \cup \bfb'_{>i+1}\in X_{AB'}$ and $\bfb''_{>i+1} \cup \bfc \in X_{B''_0C}$.
It follows that if $R\cup \{b_{i+1}\}$ is not in $X_{i+1}$ then $\bfa \cup \bfb_{\leq i} \cup \{b_{i+1}\} \cup \bfc$ is not in $X$.
Note that since $R\in X_i$, $\bfa \in X_{AB}$ and $\bfc\in X_{B_0C}$.
This implies that $\bfa \cup \{b_{i+1}\}\notin X_{AB}$ and $\{b_{i+1}\} \cup \bfc \notin X_{B_0C}$.
This contradicts our assumption that $R\cup \{b'_{i+1}\}$ or $R\cup \{b''_{i+1}\}$ is in $X_i$.
\item If $R\cup \{b_{i+1}\}$ is in $X_{i+1}$, then $R\cup \{b'_{i+1}\}$ or $R\cup \{b''_{i+1}\}$ is in $X_i$.

\noindent
If $R\cup \{b_{i+1}\}$ is in $X_{i+1}$ then $\bfa \cup \bfb_{\leq i} \cup \{b_{i+1}\} \cup \bfc$ is in $X$.
Therefore, either $\bfa \cup \{b_{i+1}\} \in X_{AB}$ or $\{b_{i+1}\} \cup \bfc \in X_{B_0C}$.
In the first case we show that 
\[
R\cup \{b'_{i+1}\} = \bfa \cup \bfb_{\leq i} \cup \bfb'_{>i+1} \cup \{b'_{i+1}\} \cup \bfb''_{>i+1} \cup \bfc
\]
is in $X_i$.
\begin{enumerate}
\item Since $\bfa \cup \bfb'_{>i+1} \in X_{AB'}$ and $\bfa \cup \{b_{i+1}\}\in X_{AB}$, we have $\bfa \cup \bfb'_{>i+1} \cup \{b'_{i+1}\}$ is in $X_{AB'}$ (This is where we are using the fact that no generator of $I$ is divisible by a quadratic monomial $x_{b_1}x_{b_2}$ with $b_1,b_2\in B$);
\item Clearly $\bfb''_{>i+1} \cup \bfc$ is in $X_{B''_0C}$ and;
\item $\bfa \cup \bfb_{\leq i} \cup \bfc$ is in $X$.
\end{enumerate}
In the other case, we see that $R\cup \{b''_{i+1}\}$ is in $X_i$.
\end{enumerate}
\end{proof}

Let $G$ be a graph on the vertex set $A_1 \cup \cdots \cup A_n$ and let $B_1,\ldots,B_{n}$ be subsets of $A_1,\ldots,A_{n}$ respectively. We always assume that $B_1 = A_1$ and $B_n=\emptyset$.
Suppose that for any edge $\{p,q\} \in E(G)$, there is some $1\leq i\leq n-1$ such that $p\in B_i$ and $q\in A_{i+1}$.
For $i=1,\ldots,n-1$, let $X_i$ be the simplicial complex on $B_{i}\cup A_{i+1}$ whose Stanley-Reisner ideal is the edge ideal of the graph with edges between $B_{i}$ and $A_{i+1}$ in $G$, i.e. the induced subgraph of $G$ on $B_{i}\cup A_{i+1}$.
Now let $X$ be the simplicial complex on $A_1\cup\cdots\cup A_n$ whose Stanley-Reisner ideal is generated by all monomials $a_1 a_2 \cdots a_s$ such that $s\leq n$, $a_s\in A_s\backslash B_s$, for all $1\leq i\leq s$, $a_i\in A_i$,  and for all $1 \leq j\leq s-1$, $\{a_{j},a_{j+1}\}$ is an edge of the graph $G$.

\begin{lemma} \label{lem-towerjoin}
Let simplicial complexes $X$ and $X_i$ be as above.
The simplicial complex $X$ is homotopy equivalent to the join $X_1 \ast \cdots \ast X_{n-1}$.
\end{lemma}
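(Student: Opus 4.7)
The plan is to induct on $n$, peeling off one layer at a time using Theorem \ref{thm-towerjoin}. The base case $n=2$ is immediate: since $B_1 = A_1$ and $B_2 = \emptyset$, the only surviving generators of $I_X$ are the products $a_1 a_2$ with $\{a_1,a_2\}$ an edge of $G$, and these are exactly the generators of the edge ideal of $X_1$. Thus $X = X_1$, and there is nothing to prove.

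For $n \geq 3$, I would introduce the auxiliary simplicial complex $Y$ on $A_1 \cup \cdots \cup A_{n-1}$ produced by the construction of the lemma applied to the induced subgraph of $G$ on $A_1 \cup \cdots \cup A_{n-1}$ with the subsets $B_1, \ldots, B_{n-2}, \emptyset$ (i.e.\ capping the last layer with empty $B$). Theorem \ref{thm-towerjoin} is then applied with
\[
A := A_1 \cup \cdots \cup A_{n-2}, \quad B := A_{n-1}, \quad B_0 := B_{n-1}, \quad C := A_n,
\]
together with $X_{AB} := Y$ and $X_{B_0 C} := X_{n-1}$. The quadratic hypothesis of that theorem is automatic here: every minimal generator of $I_Y$ is a ``path monomial'' with at most one vertex from each layer, and every minimal generator of $I_{X_{n-1}}$ is an edge $\{b,a_n\}$ with $b \in B_{n-1}$ and $a_n \in A_n$; both have at most one variable coming from $B = A_{n-1}$.

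The essential verification, and the main obstacle of the proof, is that the simplicial complex produced by Theorem \ref{thm-towerjoin} from this data coincides with our $X$. I would carry this out by matching the four families of generators listed in that theorem with the path-generators $a_1\cdots a_s$ of $I_X$, organized by $s$. Paths with $s \leq n-2$ lie entirely in $A$ and match family~(1). Paths of length $s=n-1$ necessarily end at $a_{n-1} \in A_{n-1} \setminus B_{n-1} = B \setminus B_0$; viewed in $Y$ (where the last layer's $B$ has been replaced by $\emptyset$) they are minimal generators of $I_Y$, matching family~(4). Paths of length $s=n$ must pass through some $a_{n-1} \in B_{n-1} = B_0$, since edges into $A_n$ originate in $B_{n-1}$; such a path factors as $x_{\bfa} x_b x_{\bfc}$ with $x_{\bfa} x_b \in I_Y$ (a length $n-1$ path in $Y$) and $x_b x_{\bfc} \in I_{X_{n-1}}$ (one edge), matching family~(3). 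Family~(2) is vacuous since $\bfc \subseteq A_n$ contains no vertex of $B_{n-1}$. The reverse containment, that each monomial in families~(1)--(4) is divisible by a path-generator of $I_X$, is a short check by the same case split.

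Once this identification is in place, Theorem \ref{thm-towerjoin} yields $X \simeq Y \ast X_{n-1}$. The complex $Y$ fulfills the hypotheses of the lemma with $n-1$ layers, and the complexes $X_1, \ldots, X_{n-2}$ attached to it coincide with those of the original problem (they depend only on $B_i \cup A_{i+1}$ for $i \leq n-2$, which are unchanged). The inductive hypothesis gives $Y \simeq X_1 \ast \cdots \ast X_{n-2}$, and associativity of the join up to homotopy then produces $X \simeq X_1 \ast \cdots \ast X_{n-1}$, closing the induction.
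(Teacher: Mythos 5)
Your proof is correct and follows essentially the same route as the paper: the paper also inducts on the number of layers, introducing the truncated complexes $X_{\leq i}$ (your $Y$ is exactly $X_{\leq n-2}$-type data with the last $B$-set emptied) and applying Theorem \ref{thm-towerjoin} one layer at a time. Your write-up merely supplies the verifications (the quadratic hypothesis and the matching of the four generator families with the path generators) that the paper leaves implicit.
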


\begin{proof}
Let $X_{\leq i}$ be the simplicial complex on $A_1\cup\cdots\cup A_{i+1}$ whose Stanley-Resiner ideal is generated by monomials corresponding to edge paths in $G$ from $A_1$ to $A_{i+1}$.
Note that $X_{\leq 1} = X_1$ and $X_{\leq n-1} = X$.
The assertion follows from Theorem \ref{thm-towerjoin} by induction.
\end{proof}

\begin{construction} \label{con-Y}
Let $G$ be a bipartite graph on vertex set $A\cup B$. Let $X$ be the simplicial complex on $A\cup B$ whose Stanley-Resiner ideal is the edge ideal of $G$.
Let $Y$ be the simplicial complex on $B$ with faces $\bfb\subseteq B$ such that for some $a\in A$, $\bfb\cup \{a\}$ is in $X$.
Note that the vertex set of $Y$ might be empty.
It is shown in \cite[Proposition 2]{DAli-Floystad-Nematbakhsh-01} that $X$ is homotopy equivalent to suspension of $Y$.
\end{construction}

Let $P$ be a graded poset of rank $r$ with the rank function $r_P$.
For a subset $A\subseteq P$ we can write $A = \cup_{i=1}^r A_i$ where $A_i = A\cap P_i$.
Let $B_i = A_i\backslash \max(P)$ for $i=2,\ldots, r-1$ and let $B_1=A_1$ and $B_r = A_r$.
There is a graph $G$ on vertex set $A$ such that the edges are between $B_{i}$ and $A_{i+1}$ for $i=1,\ldots,r-1$ and $\{a_{i},a_{i+1}\} \in E(G)$ if and only if $a_{i+1}$ covers $a_{i}$ in $P$.
Let $X_i(A)$ be the simplicial complex whose Stanley-Resiner ideal is the edge ideal of the induced bipartite subgraph of $G$ on $B_{i}\cup A_{i+1}$.
We denote this bipartite graph by $G_i(A)$.
We also denote the complex defined on $B_i$ as in Construction \ref{con-Y} by $Y_i(A)$.

\begin{proposition}
Let $P$ be a finite graded poset of rank $r$.
Let $\Delta$ be a simplicial complex whose Stanley-Resiner ideal is the flag ideal $\mcF(P) \subseteq \kk[x_P]$  and let $A\subseteq P$ be a multidegree.
The restriction $\Delta_A$ is homotopy equivalent to the join
$X_1(A) \ast \cdots \ast X_{r-1}(A)$.
\end{proposition}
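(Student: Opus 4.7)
The plan is to identify the restricted complex $\Delta_A$ with the simplicial complex $X$ of Lemma \ref{lem-towerjoin}, applied to the union of the bipartite graphs $G_i(A)$, and then to quote that lemma directly. First I would observe that since $\mcF(P)$ is the Stanley-Reisner ideal of $\Delta$, the minimal non-faces of $\Delta$ are exactly the maximal chains of $P$; after restriction, the minimal non-faces of $\Delta_A$ are then precisely the maximal chains of $P$ that happen to lie entirely in $A$.

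The next step is to match these maximal chains with the ``edge path'' generators in Lemma \ref{lem-towerjoin}. Any maximal chain $p_1 < p_2 < \cdots < p_s$ of $P$ contained in $A$ satisfies $r_P(p_i) = i$ (so $p_i \in A_i$), each $p_{i+1}$ covers $p_i$, and $p_s \in \max(P)$. For $i < s$ the element $p_i$ has a child in $P$ and is therefore non-maximal, so $p_i \in B_i$; hence $\{p_i, p_{i+1}\}$ is an edge of the bipartite graph $G_i(A)$, and the whole chain is an edge path in the disjoint union $G = G_1(A) \cup \cdots \cup G_{r-1}(A)$. The endpoint condition $p_s \in \max(P)$ corresponds exactly to the condition $p_s \in A_s \setminus B_s$ appearing in Lemma \ref{lem-towerjoin}. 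Thus the Stanley-Reisner ideal of $\Delta_A$ coincides, generator by generator, with the Stanley-Reisner ideal of the complex $X$ produced by Lemma \ref{lem-towerjoin} on this data.

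Since two simplicial complexes on a common vertex set with the same Stanley-Reisner ideal are equal, this gives $\Delta_A = X$, and Lemma \ref{lem-towerjoin} then yields the desired homotopy equivalence $\Delta_A \simeq X_1(A) \ast \cdots \ast X_{r-1}(A)$. The main thing to be careful about is the endpoint bookkeeping at the extremal ranks $s=1$ and $s=r$, so that the condition $p_s \in A_s \setminus B_s$ really captures ``$p_s$ maximal in $P$'' in every case; the nontrivial homotopy-theoretic content is entirely absorbed into Theorem \ref{thm-towerjoin} and Lemma \ref{lem-towerjoin} and so needs no reproof here.
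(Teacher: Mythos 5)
Your proposal is correct and follows essentially the same route as the paper: identify the minimal nonfaces of $\Delta_A$ (the maximal chains of $P$ lying in $A$) with the edge-path generators of the complex $X$ in Lemma \ref{lem-towerjoin}, then invoke that lemma — you merely spell out the identification in more detail than the paper does. The endpoint bookkeeping you flag (chains ending at a maximal element of rank $1$, where the convention $B_1=A_1$ makes $A_1\setminus B_1=\emptyset$) is treated with the same brevity in the paper's own proof, so your attempt matches it step for step.
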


\begin{proof}
The Stanley-Resiner ideal of the simplicial complex $\Delta_A$ is generated by monomials
$x_{a_1}x_{a_2}\cdots x_{a_s}$ where
$a_s$ is a maximal element of rank $s$, and for $1\leq i\leq s-1$, $a_i\in A_i$  and $\{a_i,a_{i+1}\}$ is an edge of the graph $G$ defined above.
Now the assertion follows from Lemma \ref{lem-towerjoin}.
\end{proof}

We define the {\it polynomial of reduced cohomology} of a simplicial complex $X$ to be
\[
\tilde{H}(X,t) = \sum_{i\geq -1} t^i \dim_\kk \tilde{H}^i(X,\kk).
\]

Let $A$ be a multidegree and let $|A|=n$.
We call the polynomial
\[
\beta(A,t) = t^n \beta_{0,A}(\mcF(P))+t^{n-1} \beta_{1,A}(\mcF(P)) +\cdots+ t \beta_{n-1,A}(\mcF(P))
\]
the {\it Betti polynomial} of the ideal $\mcF(P)$ with respect to a multidegree $A$.

\medskip
The following is Theorem 4 in \cite{DAli-Floystad-Nematbakhsh-01}.

\begin{theorem}
Let $P$ be a finite graded poset and let $r=\overline{r}(P)$
The Betti polynomial $\beta(A,t)$ of $\mcF(P)$ is
\[
\beta(A,t) = t^r \prod_{i=1}^{r-1} \tilde{H}(X_i(A),t).
\]
\end{theorem}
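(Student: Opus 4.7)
The plan is to combine three standard tools: Hochster's formula, the homotopy equivalence from the previous proposition, and the Künneth formula for joins of simplicial complexes. The argument is essentially a bookkeeping exercise, so the main task is to line up the index shifts correctly.

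First I would translate the Betti polynomial into a cohomology polynomial. By Hochster's formula, for the squarefree monomial ideal $\mcF(P)$ one has
\[
\beta_{i,A}(\mcF(P)) \;=\; \dim_\kk \tilde{H}^{|A|-i-2}(\Delta_A;\kk).
\]
Substituting into the defining sum for $\beta(A,t)$ and reindexing with $j=|A|-i-2$ collapses the expression to
\[
\beta(A,t) \;=\; t^2 \cdot \tilde{H}(\Delta_A,t).
\]
This reduces the problem to computing $\tilde{H}(\Delta_A,t)$.

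Next I would invoke the previous proposition, which gives a homotopy equivalence
\[
\Delta_A \;\simeq\; X_1(A) \ast X_2(A) \ast \cdots \ast X_{r-1}(A).
\]
Since the reduced cohomology only sees the homotopy type, this lets me replace $\tilde{H}(\Delta_A,t)$ by the cohomology polynomial of the iterated join. The third ingredient is the Künneth formula for the join of two simplicial complexes (over the field $\kk$): $\tilde{H}^{k+1}(X\ast Y;\kk)\cong\bigoplus_{i+j=k}\tilde{H}^i(X;\kk)\otimes\tilde{H}^j(Y;\kk)$. Repackaging this in terms of the polynomial $\tilde{H}(-,t)$ yields the clean identity
\[
\tilde{H}(X\ast Y,t) \;=\; t\cdot \tilde{H}(X,t)\cdot \tilde{H}(Y,t),
\]
where the extra factor of $t$ accounts for the degree shift in the Künneth isomorphism. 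Iterating this identity over the $r-1$ factors of the join contributes $r-2$ extra factors of $t$, giving
\[
\tilde{H}(\Delta_A,t) \;=\; t^{\,r-2}\prod_{i=1}^{r-1}\tilde{H}(X_i(A),t).
\]

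Finally I would assemble the pieces: plugging this into $\beta(A,t)=t^2\tilde{H}(\Delta_A,t)$ produces $t^2\cdot t^{r-2}=t^r$ in front of the product, which is exactly the claimed formula. The only delicate point is the careful matching of the three separate index conventions (the Hochster shift by $2$ from passing between $\beta_{i,A}(\mcF(P))$ and cohomological degree, the join Künneth shift by $1$, and the convention $\tilde{H}^{-1}(\emptyset,\kk)=\kk$ which makes the product of polynomials well-behaved when some $X_i(A)$ happens to be empty); if any of these is miscounted the exponent of $t$ on the right-hand side comes out wrong. This is where I would spend the bulk of the proof-writing effort.
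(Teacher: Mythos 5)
Your proof is correct and is essentially the argument the paper delegates to \cite{DAli-Floystad-Nematbakhsh-01}: the paper states this theorem by citation rather than proving it, and the intended derivation is exactly your combination of Hochster's formula, the preceding proposition identifying $\Delta_A$ up to homotopy with the join $X_1(A)\ast\cdots\ast X_{r-1}(A)$, and the K\"unneth formula for joins. Your bookkeeping is right: $\beta(A,t)=t^{2}\,\tilde{H}(\Delta_A,t)$ from Hochster, a factor $t^{\,r-2}$ from iterating $\tilde{H}(X\ast Y,t)=t\,\tilde{H}(X,t)\,\tilde{H}(Y,t)$ over the $r-1$ join factors, giving $t^{r}$ in total, with the convention $\tilde{H}^{-1}(\{\emptyset\},\kk)=\kk$ correctly handling any irrelevant factors $X_i(A)$.
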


Let $P$ be a graded poset with the rank function $r_P$. Let $r=\overline{r}(P)$ and let $s = \underline{r}(P) = \min\{r_P(p) | p\in \max(P)\}$. The first linear strand of $\mcF(P)$ is the $s$-linear strand.

\begin{lemma}
Let $A\subseteq P$ be a multidegree.
If for some $i$, $X_i(A)$ is the irrelevant complex then either the multidegree $A$ does not appear in the resolution of $\mcF(P)$ or $A\subseteq A_1 \cup \cdots \cup A_{i-1} \cup(A_i \backslash B_i)$ and for all $j\geq i$, $X_j(A)$ is the irrelevant complex as well.
\end{lemma}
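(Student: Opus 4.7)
The plan is to unpack the hypothesis combinatorially and then kill the reduced cohomology of the restricted Stanley--Reisner complex by exhibiting a cone point. Let $\Delta_A$ denote the restriction to $A$ of the Stanley--Reisner complex of $\mcF(P)$. First, I would rephrase the assumption: since $X_i(A)$ is a simplicial complex on vertex set $B_i\cup A_{i+1}$, the condition $X_i(A)=\{\emptyset\}$ is equivalent to $B_i=\emptyset$ and $A_{i+1}=\emptyset$. Recall also that $A$ appears in the minimal free resolution of $\mcF(P)$ exactly when $\tilde{H}^*(\Delta_A,\kk)\neq 0$ (Hochster's formula) and that the minimal non-faces of $\Delta_A$ are precisely the maximal chains of $P$ contained in $A$.

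Next, I would establish the dichotomy by showing: if $A$ contains any element $v$ with $r_P(v)\geq i+2$, then $A$ cannot appear. Indeed, any maximal chain $C$ of $P$ through $v$ passes through every rank from $1$ to $r_P(\max C)\geq i+2$, so in particular it contains an element of rank $i+1$. Since $A_{i+1}=\emptyset$, no such $C$ lies entirely in $A$, so $v$ is contained in no minimal non-face of $\Delta_A$ and is therefore a cone point: for every face $F$ of $\Delta_A$, $F\cup\{v\}$ is again a face. A simplicial complex with a cone point is a cone, hence contractible, so $\tilde{H}^*(\Delta_A,\kk)=0$, and every multigraded Betti number of $\mcF(P)$ at $A$ vanishes by Hochster's formula.

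Having this, if $A$ does appear in the resolution then $A_j=\emptyset$ for every $j\geq i+2$; combined with the hypotheses $A_{i+1}=\emptyset$ and $A_i\cap B_i=\emptyset$ (from $B_i=\emptyset$), this gives $A\subseteq A_1\cup\cdots\cup A_{i-1}\cup(A_i\setminus B_i)$. The remaining claim that $X_j(A)=\{\emptyset\}$ for every $j\geq i$ is bookkeeping: the case $j=i$ is the hypothesis, while for $j>i$ one has $B_j\subseteq A_j=\emptyset$ and $A_{j+1}=\emptyset$.

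The main obstacle, I expect, is spotting the correct local condition at $v$: once one sees that \emph{every maximal chain through any $v\in A$ of rank $\geq i+2$ must exit $A$ at rank $i+1$} is the right translation of $X_i(A)=\{\emptyset\}$, the rest reduces to the standard contractibility of a simplicial complex with a cone point together with the straightforward bookkeeping just described.
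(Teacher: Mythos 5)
Your proof is correct, but it takes a different route from the paper. The paper argues through the join decomposition established just before this lemma: assuming $A$ appears, it picks the minimal $j>i+1$ with $A_j\neq\emptyset$, notes that $B_{j-1}=\emptyset$ forces $X_{j-1}(A)$ to be a full (nonempty) simplex, hence contractible, and then the homotopy equivalence of $\Delta_A$ with $X_1(A)\ast\cdots\ast X_{r-1}(A)$ kills all reduced cohomology of $\Delta_A$, contradicting Hochster's formula. You instead bypass the join machinery entirely: from $X_i(A)=\{\emptyset\}$ you read off $B_i=A_{i+1}=\emptyset$, observe that the minimal non-faces of $\Delta_A$ are exactly the maximal chains of $P$ contained in $A$, and show that any $v\in A$ of rank at least $i+2$ lies in no such chain (every maximal chain through $v$ is saturated from rank $1$ and so must pass through the empty set $A_{i+1}$), making $v$ a cone point of $\Delta_A$; contractibility then contradicts the appearance of $A$, and the containment $A\subseteq A_1\cup\cdots\cup A_{i-1}\cup(A_i\setminus B_i)$ and the vanishing of $X_j(A)$ for $j\geq i$ follow by the bookkeeping you describe. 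Both arguments are sound; the paper's version stays inside the framework (the join decomposition is in any case needed for the Betti polynomial and Proposition \ref{pro-firststrand}), whereas yours is more elementary and self-contained, requiring only Hochster's formula, gradedness of $P$, and the identification of the minimal non-faces of $\Delta_A$, and it even yields the slightly sharper pointwise statement that every element of $A$ of rank at least $i+2$ is a cone vertex.
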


\begin{proof}
Suppose the multidegree $A$ appears with a nonzero multigraded Betti number in the resolution of $\mcF(P)$.
We first show that if for some $i$, $X_i(A)$ is the irrelevant complex then for all $j\geq i$, $X_j(A)$ is also the irrelevant complex.
Suppose $X_i = \{\emptyset\}$. This can only occur if $B_i \cup A_{i+1} = \emptyset$. Suppose for some $j>i+1$, $A_j \neq \emptyset$ and let $j$ be the minimal integer with this property. 
Since there are no edges between $B_{j-1}$ and $A_j$, $X_{j-1}$ is contractible and the multidegree $A$ can not occur which is a contradiction.
Therefore, for all $j \geq i+1$, $A_j = \emptyset$. Thus $X_{j}=\{\emptyset\}$ for all $j\geq i$.
\end{proof}

\begin{proposition} \label{pro-firststrand}
Let $P$ be a finite graded poset with the rank function $r_P$ and let $s=\underline{r}(P)$.
The multidegree $A$ appears in the first linear strand if and only if
\begin{enumerate}
\item $A \subseteq P_{1,\ldots,s}$ and $A_s \subseteq \max(P)$;
\item for all $1\leq i\leq s$, $A_i$ is nonempty;
\item The induced subposet of $P$ on $B_i\cup A_{i+1}$ is a complete bipartite graph for all $1\leq i\leq s-1$.
\end{enumerate}
\end{proposition}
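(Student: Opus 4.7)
The plan is to exploit the Betti polynomial formula $\beta(A,t)=t^r\prod_{j=1}^{r-1}\tilde{H}(X_j(A),t)$ stated above. The multidegree $A$ sits in the first (that is, $s$-)linear strand precisely when $\beta_{|A|-s,A}(\mcF(P))\neq 0$, which by that formula means the coefficient of $t^{s-r}$ in the product $\prod_{j=1}^{r-1}\tilde{H}(X_j(A),t)$ is nonzero. Because each $\tilde{H}(X_j(A),t)$ has non-negative coefficients supported in degrees $\geq -1$, this coefficient equals a sum of products of cohomology dimensions, and is nonzero iff there exists a tuple $(n_1,\ldots,n_{r-1})$ with $n_j\geq -1$, $\sum n_j=s-r$, and $\tilde{H}^{n_j}(X_j(A))\neq 0$ for every $j$.

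I would then record an elementary classification of $X_j(A)$ in terms of $B_j$ and $A_{j+1}$: if exactly one of them is empty, $X_j(A)$ is a full simplex, hence contractible, and the product vanishes identically; if both are empty, $X_j(A)=\{\emptyset\}$ with $\tilde{H}^{-1}=\kk$ and nothing else; and if both are nonempty, $X_j(A)$ is the independence complex of the bipartite graph $G_j(A)$, which is disconnected with $\tilde{H}^0=\kk$ exactly when $G_j(A)$ is complete bipartite $K_{|B_j|,|A_{j+1}|}$ (any non-edge of $G_j(A)$ is a $1$-face of $X_j(A)$ joining $\Delta_{B_j}$ to $\Delta_{A_{j+1}}$, making $X_j(A)$ connected and forcing $\tilde{H}^0=0$).

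Next I would use the preceding lemma to pin down the tail. Set $j_0=\max\{j:A_j\neq\emptyset\}$. Ruling out contractible factors forces $A_j\neq\emptyset$ and $B_j\neq\emptyset$ for all $j<j_0$, while the lemma gives $X_j(A)=\{\emptyset\}$ for all $j\geq j_0$ and $A_{j_0}\subseteq\max(P)$. Hence the tail $j\geq j_0$ contributes $r-j_0$ summands of $-1$, while the head $j<j_0$ contributes $n_j\geq 0$. The equation $\sum n_j=s-r$ becomes $\sum_{j<j_0}n_j=s-j_0$ with non-negative summands, so $j_0\leq s$; meanwhile $A_{j_0}\subseteq\max(P)$ together with $s=\underline{r}(P)$ forces $j_0\geq s$. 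Thus $j_0=s$, and then $\sum_{j<s}n_j=0$ with each $n_j\geq 0$ forces $n_j=0$ for every $j\leq s-1$, i.e.\ each such $X_j(A)$ is disconnected. Via the classification this reads off exactly conditions (1), (2), (3). Conversely, under (1)--(3), the tuple $n_j=0$ for $j\leq s-1$ and $n_j=-1$ for $j\geq s$ yields a single nonzero term in the relevant coefficient, since $X_j(A)\simeq S^0$ for $j\leq s-1$ by (3) and $X_j(A)=\{\emptyset\}$ for $j\geq s$ by (1), together with the observation that no element of rank $<s$ can be maximal, giving $B_j\neq\emptyset$ for $j\leq s-1$.

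The main obstacle is the bookkeeping in the third paragraph: it is \emph{a priori} possible that some $X_j(A)$ in the ``connected but not contractible'' part of the classification contributes $n_j\geq 1$ and is compensated elsewhere. Using the preceding lemma to localise the tail of $\{\emptyset\}$-factors is what leaves only non-negative head exponents, so that their vanishing sum forces each to be $0$ individually and excludes case (iv) of the classification. Once this is set up, the translation between disconnectedness of the bipartite independence complex and completeness of the bipartite graph is immediate.
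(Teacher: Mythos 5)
Your proof is correct and takes essentially the same route as the paper's: it rests on the Betti polynomial formula $\beta(A,t)=t^r\prod_i\tilde{H}(X_i(A),t)$, uses the preceding lemma (plus ruling out contractible factors) to force the irrelevant factors into the tail $j\geq s$, and identifies nonvanishing of $\tilde{H}^0(X_j(A))$ for $j\leq s-1$ with the bipartite graphs on $B_j\cup A_{j+1}$ being complete. The only cosmetic differences are that you make the exponent bookkeeping with $j_0$ explicit and prove the disconnectedness criterion for the independence complex directly, where the paper instead cites $\tilde{H}^0(X_i(A))\cong\tilde{H}^{-1}(Y_i(A))$ from the D'Al\`i--Fl{\o}ystad--Nematbakhsh reference.
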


\begin{proof}
The multidegree $A$ appears in the first linear strand of $P$ if and only if $|A| = j+s$ and $\beta_{j,A}$ is nonzero. The Betti number $\beta_{j,A}$ is the coefficient of $t^s$ in $\beta(A,t)$.
Since $\beta(A,t) = t^r \prod_{i=1}^{r-1} \tilde{H}(X_i(A),t)$ at least $r-s$ simplicial complexes $X_i$ are the irrelevant complex $\{\emptyset\}$.
Note that for $1\leq i\leq s-1$, $B_i = A_i$.
Suppose $i$ is the smallest integer such that $X_i = \{\emptyset\}$.
If $i\leq s-1$ then $A \subseteq A_1\cup \cdots \cup (A_i \backslash B_i)$. But $B_i=A_i$ and
$A \subseteq A_1\cup \cdots \cup  A_{i-1}$.
Since $i$ is the smallest integer with $X_i$ being the irrelevant complex we find that $B_{i-1}$ is nonempty. Hence $X_{i-1}$ is contractible which is a contradiction.
This shows that exactly for $s\leq i \leq r-1$, $X_i$ is the irrelevant complex and
$A \subseteq A_1 \cup \cdots\cup (A_s\backslash B_s)$.
Now $\beta_{i,A}$ is the coefficient of $t^s \prod_{i=1}^{s-1} \tilde{H}(X_i(A),t)$ and each $\tilde{H}(X_i(A),t)$ must have a nonzero constant term.
By lemma \cite[Proposition 2]{DAli-Floystad-Nematbakhsh-01}, $\tilde{H}^0(X_i(A),\kk) = \tilde{H}^{-1}(Y_i(A),\kk)$ and it is nonzero if and only if $Y_i(A)$ is the irrelevant complex. This shows that for any $p\in B_i$ and any $q \in A_{i+1}$, $p \leq q$ or equivalently the induced subposet of $P$ on $B_i \cup A_{i+1}$ is a complete bipartite graph.
\end{proof}

\section{Flag ideals with linear resolutions}
\label{sec-linearRes}

Let $P$ be a graded poset with the rank function $r_P$. Let $r=\overline{r}(P)$ be the rank of $P$.
If $\mcF(P)$ has a linear resolution then all of the minimal generators should have the same degree $r$. Thus $P$ must be a pure poset.
We first consider the rank 2 poset, i.e. the posets that their Hasse diagram is a bipartite graph.
In \cite{Nagel-Reiner-01}, U. Nagel and V. Reiner show that the edge ideal of a bipartite graph has a linear resolution if and only if it is a Ferrers graph.
Here we provide another equivalent condition.

\begin{definition}
A {\it Ferrers graph} is a bipartite graph $G$ on two distinct vertex sets $\{x_1,\ldots,x_n\}$ and $\{y_1,\ldots,y_m\}$ satsidfying the following two conditions.
\begin{enumerate}
\item If $\{x_p,y_q\}$ is an edge of $G$, then for all $i<p$ and $j<q$, $\{x_i,y_j\}$ is an edge of $G$ as well;
\item $\{x_1,y_m\}$ and $\{x_n,y_1\}$ are edges of $G$. 
\end{enumerate}
\end{definition}

\begin{theorem} \label{thm-Ferrerschar}
Let $P$ be a bipartite graph.
The following are equivalent.
\begin{enumerate}
\item The flag ideal $\mcF(P)$ has a linear resolution;
\item $P$ does not have an induced subgraph of the form below;
\begin{center}
\begin{tikzpicture}[scale=.75, vertices/.style={draw, fill=black, circle, inner sep=1pt}]
             \node [vertices, label=right:{${p}_{1}$}] (0) at (-.75+0,0){};
             \node [vertices, label=right:{${p}_{2}$}] (2) at (-.75+1.5,0){};
             \node [vertices, label=right:{${q}_{1}$}] (1) at (-.75+0,1.33333){};
             \node [vertices, label=right:{${q}_{2}$}] (3) at (-.75+1.5,1.33333){};
     \foreach \to/\from in {0/1, 2/3}
     \draw [-] (\to)--(\from);
     \end{tikzpicture}
\end{center}
\item $P$ is a Ferrers graph.
\end{enumerate}
\end{theorem}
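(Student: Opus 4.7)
The equivalence (1) $\Leftrightarrow$ (3) has already been established by Nagel and Reiner (as cited in the introduction), so the real content is the combinatorial equivalence (2) $\Leftrightarrow$ (3). My plan is therefore to derive (2) from (3) directly and to recover (3) from (2) by exhibiting a suitable Ferrers ordering of the bipartition. After establishing (2) $\Leftrightarrow$ (3), the theorem follows by transitivity.

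\textbf{Step 1: (3) $\Rightarrow$ (2).} Assume $P$ is a Ferrers graph with bipartition $\{x_1,\dots,x_n\}\cup\{y_1,\dots,y_m\}$. Suppose, toward contradiction, that the four vertices $\{p_1,p_2,q_1,q_2\}$ with $p_1,p_2\in X$ and $q_1,q_2\in Y$ induce exactly the two disjoint edges $\{p_1,q_1\}$ and $\{p_2,q_2\}$. Writing $p_1=x_{i_1},p_2=x_{i_2},q_1=y_{j_1},q_2=y_{j_2}$ with $i_1<i_2$ (WLOG), I split into the two cases $j_1<j_2$ and $j_1>j_2$ and in each case apply the Ferrers staircase condition to one of the given edges, producing one of the supposedly missing edges $\{x_{i_1},y_{j_2}\}$ or $\{x_{i_2},y_{j_1}\}$. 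This is a brief bookkeeping argument.

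\textbf{Step 2: (2) $\Rightarrow$ (3).} Assume $P$ contains no induced pair of disjoint edges. For each $x\in X$ let $N(x)\subseteq Y$ denote its neighborhood. The key claim is that the family $\{N(x)\}_{x\in X}$ is totally ordered by inclusion: if some $x_1,x_2$ had $y_1\in N(x_1)\setminus N(x_2)$ and $y_2\in N(x_2)\setminus N(x_1)$, then $\{x_1,x_2,y_1,y_2\}$ would induce exactly the two disjoint edges $\{x_1,y_1\}$ and $\{x_2,y_2\}$, contradicting (2). Relabel the $x$-vertices so that $N(x_1)\supseteq N(x_2)\supseteq\cdots\supseteq N(x_n)$; by the symmetric argument in the other bipartition class, relabel the $y$-vertices so that $N(y_1)\supseteq\cdots\supseteq N(y_m)$. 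I then verify that under this labeling the Ferrers condition holds: if $\{x_p,y_q\}$ is an edge, then for $i\le p$ we have $y_q\in N(x_p)\subseteq N(x_i)$, and for $j\le q$ we have $x_i\in N(y_q)\subseteq N(y_j)$, so $\{x_i,y_j\}$ is an edge. The second clause in the definition of a Ferrers graph follows after discarding isolated vertices (which do not affect $\mcF(P)$ beyond introducing dummy variables): the largest neighborhood $N(x_1)$ is then all of $Y$, in particular it contains $y_m$, and symmetrically $\{x_n,y_1\}$ is an edge.

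\textbf{Main obstacle.} Neither direction is deep; the only delicate point is verifying that the two orderings (of $X$ by decreasing $|N(x)|$ and of $Y$ by decreasing $|N(y)|$) are compatible in the sense needed for the Ferrers staircase. This follows because once the neighborhoods are linearly ordered, writing a $0/1$ incidence matrix with the prescribed row- and column-orderings produces a matrix whose rows are nested and whose columns are nested, which is precisely the staircase shape of a Ferrers diagram. I would also flag the minor point about isolated vertices and comment that they can be ignored without loss of generality, since $\mcF(P)$ depends only on the edges.
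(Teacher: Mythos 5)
Your proof is correct, but it closes the cycle of implications along a genuinely different route from the paper. The paper proves $(1)\Leftrightarrow(2)$ directly with its resolution machinery: for $(1)\Rightarrow(2)$ it restricts the Stanley--Reisner complex to the multidegree $A=\{p_1,p_2,q_1,q_2\}$ of an induced pair of disjoint edges and notes that the resulting four-cycle has nonvanishing cohomology off the first linear strand, and for $(2)\Rightarrow(1)$ it takes a multidegree not on the first linear strand, uses Proposition~\ref{pro-firststrand} and the complex $Y_1(A)$ of Construction~\ref{con-Y} to extract a minimal nonface, and manufactures an induced pair of disjoint edges; the equivalence with (3) is then quoted from Corso--Nagel. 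You instead quote the literature for $(1)\Leftrightarrow(3)$ (the correct attribution is Corso--Nagel, Theorem 4.2; Nagel--Reiner is the hypergraph generalization) and prove $(2)\Leftrightarrow(3)$ by pure graph theory: absence of an induced $2K_2$ forces the neighborhoods on each side to be totally ordered by inclusion, and relabelling by decreasing neighborhoods produces the staircase, while the staircase conversely excludes an induced $2K_2$. This is the classical chain-graph (difference-graph) characterization, and your verification that the two orderings are compatible is complete as written; in fact, once the $X$-neighborhoods are nested, each $N(y)$ is automatically an initial segment of $x_1,\ldots,x_n$, so the second ordering comes for free. What each route buys: yours is more elementary and makes the combinatorial equivalence $(2)\Leftrightarrow(3)$ independent of any homology, but it leaves all the homological content of (1) to the citation; the paper's route is heavier but establishes the link between the $2K_2$ obstruction and non-linear strands intrinsically, and precisely this mechanism (the multidegree construction together with Proposition~\ref{pro-firststrand}) is reused in the proof of Theorem~\ref{thm-minres}, so its choice is not incidental. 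Two minor caveats: discarding isolated vertices is harmless for edge ideals, but in the poset reading an isolated vertex is a length-one maximal chain and contributes a degree-one generator to $\mcF(P)$, so the theorem implicitly assumes every vertex lies on an edge; and your Step 1 uses the nested-neighborhood form of the Ferrers condition (closure under $i\le p$, $j\le q$), which is the intended reading of the paper's definition even though it is stated there with strict inequalities.
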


\begin{proof}
$(1) \Rightarrow (2)$ Suppose $P$ has an induced subposet with cover relations $p_1\leq q_1$ and $p_2\leq q_2$. For the multidegree $A=\{p_1,p_2,q_1,q_2\}$ the simplicial complex $X_1(A)$ is
\begin{center}
\begin{tikzpicture}[scale=.75, vertices/.style={draw, fill=black, circle, inner sep=1pt}]
             \node [vertices, label=left:{${p}_{1}$}] (0) at (-.75+0,0){};
             \node [vertices, label=right:{${p}_{2}$}] (2) at (-.75+1.5,0){};
             \node [vertices, label=left:{${q}_{2}$}] (1) at (-.75+0,1.33333){};
             \node [vertices, label=right:{${q}_{1}$}] (3) at (-.75+1.5,1.33333){};
     \foreach \to/\from in {0/2, 1/3, 0/1, 2/3}
     \draw [-] (\to)--(\from);
     \end{tikzpicture}
\end{center}
which has nonvanishing cohomology. Thus the multidegree $A$ appears in the minimal resolution of $\mcF(P)$ but it is not on the first linear strand by Proposition \ref{pro-firststrand}.

\medskip
$(2) \Rightarrow (1)$
On the contrary, suppose $\mcF(P)$ does not have a linear resolution.
Let $A$ be a multidegree such that for some $i$, $\beta_{i,A}(\mcF(P))$ is nonzero and does not lie on the first linear strand of the minimal free resolution of $\mcF(P)$.
Since $A$ is not on the first linear strand, by Proposition \ref{pro-firststrand}, the induced subgraph on $A$ is not a full bipartite graph.
Let $A_1$ and $A_2$ be the set of elements of rank 1 and 2 in $A$ respectively.
Let $Y_1(A)$ be the simplicial complex defined in Construction \ref{con-Y} on $A_1$.
The complex $Y_1(A)$ is not the irrelevant complex $\{\emptyset\}$ and 
it has at least 2 vertices, since otherwise it would be contractible which is a contradiction.
Let $F=\{p_1,\ldots,p_s\} \subseteq A_1$ be a minimal nonface of $Y_1(A)$ with $s\geq 2$.
There exists an element $q_1\in A_2$ which is connected to $p_1$ but it is not connected to the rest of elements of $F$. Similarly, there exist an element connected to $p_2$ which is not connected to the other elements of $F$.
The induced subposet of $P$ on $\{p_1,p_2,q_1,q_2\}$ is of the form above which contradicts our assumption.
Therefore such a multidegree does not appear in the resolution of $\mcF(P)$. This shows that $\mcF(P)$ has a linear resolution.

\medskip
$(1) \Leftrightarrow (3)$ This is \cite[Theorem 4.2]{Corso-Nagel-01}.
\end{proof}

The condition (2) of theorem above gives a nontrivial characterization of Ferrers graphs.
This description for Ferrers graphs is essentially used in characterization of flag ideals with linear resolutions below.

\begin{theorem}
\label{thm-minres}
Let $\mcF(P)$ be the flag ideal of a poset $P$ of rank $r$.
The ideal $\mcF(P)$ has a linear resolution if and only if for $i=1,\ldots,r-1$, the Hasse diagram of $P_{i,i+1}$ is a Ferrers graph.
In particular, if $\mcF(P)$ has a linear resolution then $P$ is connected.
\end{theorem}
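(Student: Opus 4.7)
The statement is a biconditional. The plan is to use the Betti polynomial formula
\[
\beta(A,t) = t^r \prod_{k=1}^{r-1} \tilde{H}(X_k(A), t),
\]
together with Proposition \ref{pro-firststrand}, which characterizes multidegrees on the first linear strand by complete--bipartite conditions on the slices $B_k \cup A_{k+1}$.

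For the direction $(\Leftarrow)$, I will first prove a homotopy lemma: for any bipartite graph $G$ arising as an induced subgraph of a Ferrers graph (equivalently, whose edges form a Young-diagram shape), the independence complex of $G$ is either contractible or a disjoint union of two simplices, the latter occurring exactly when $G$ is complete bipartite. In the isolated-vertex case the complex is a cone; in the strict-Ferrers case a folding argument works, since $\lambda_i > \lambda_{i+1}$ gives $N(x_{i+1}) \subsetneq N(x_i)$ and yields a deformation retract, repeated until an isolated vertex appears. Since Ferrers graphs have no isolated vertices, the assumption that each $P_{i,i+1}$ is Ferrers forces $P$ to be pure of rank $r$. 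Applying the lemma via the Betti formula, any multidegree $A$ with $\beta(A,t) \neq 0$ must have every $A_k$ nonempty and every induced subposet on $B_k \cup A_{k+1}$ complete bipartite (cascading empty ranks would kill the product), which are precisely the linear-strand conditions of Proposition \ref{pro-firststrand}.

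For the direction $(\Rightarrow)$, since $\mcF(P)$ has a linear resolution of degree $r$ the poset $P$ is pure. Assuming some $P_{i,i+1}$ is not Ferrers, Theorem \ref{thm-Ferrerschar} yields elements $p_1, p_2 \in P_i$ and $q_1, q_2 \in P_{i+1}$ whose only covering relations among the four are $p_1 < q_1$ and $p_2 < q_2$. I will construct a multidegree $A$ with $\beta(A,t) \neq 0$ not on the first linear strand, contradicting linearity. Set $A_i = \{p_1, p_2\}$ and $A_{i+1} = \{q_1, q_2\}$, then extend inductively down to rank $1$ and up to rank $r$ by the rule: if the two currently-chosen elements share a common parent (respectively child), take $A_k$ to be that single element; otherwise take distinct parents (resp.\ children), one for each of the two elements. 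The key observation is that in a graded poset any cross cover relation between a chosen distinct parent and the ``wrong'' element would make that parent a common parent of both, contradicting the no-common-parent case. Hence each complex $X_k(A)$ is either two isolated vertices, a vertex plus a disjoint edge (both contributing $t^0$), or a $4$-cycle $S^1$ (contributing $t^1$). At rank $i$ itself the non-Ferrers structure makes $X_i(A)$ exactly a $4$-cycle, so $\prod_k \tilde{H}(X_k(A),t)$ has degree at least $1$, and $\beta(A,t)$ has a nonzero coefficient in degree strictly greater than $r$, producing a Betti number $\beta_{j,A}(\mcF(P)) \neq 0$ with $|A| \neq j + r$, a contradiction. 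The main technical obstacle is avoiding the ``three out of four covers'' configuration, which would make $X_k(A)$ contractible and kill the product; the cross-cover observation resolves this precisely. Finally, $P$ is connected since each Ferrers slice $P_{i,i+1}$ is connected (every vertex is adjacent to a corner vertex $x_1$ or $y_1$) and consecutive slices share vertex sets across ranks.
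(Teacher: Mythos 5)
Your argument is correct, and the two directions split as follows relative to the paper. Your $(\Rightarrow)$ is essentially the paper's own proof: the same multidegree $A$ built from an induced pair of disjoint edges in a non-Ferrers slice (supplied by Theorem \ref{thm-Ferrerschar}) by propagating common or distinct parents/children, the same short list of possible slice graphs, and the same appeal to the Betti polynomial and Proposition \ref{pro-firststrand}; your cross-cover observation (a cover relation from a chosen parent to the ``wrong'' element would make it a common parent) is a welcome justification of a step the paper leaves implicit, namely that no extra edges can spoil the two-disjoint-edges slices. Your $(\Leftarrow)$ is genuinely different: the paper transfers a non-linear multidegree of $\mcF(P)$ to a non-linear multidegree of a single edge ideal of $P_{j,j+1}$ via the join decomposition (Theorem \ref{thm-towerjoin}) and the K\"unneth formula, and then invokes the Corso--Nagel result (through Theorem \ref{thm-Ferrerschar}) that Ferrers edge ideals have linear resolutions; you instead prove directly that the independence complex of any induced subgraph of a Ferrers graph is contractible unless the subgraph is complete bipartite without isolated vertices, in which case it is two disjoint simplices, so every factor $\tilde{H}(X_k(A),t)$ of a nonvanishing Betti polynomial is concentrated in degree $0$ and $\beta(A,t)$ is a multiple of $t^r$. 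This makes the direction self-contained at the level of homotopy and avoids citing Corso--Nagel, at the cost of the fold lemma ($N(u)\subseteq N(v)$ implies the independence complex of $G$ retracts onto that of $G\setminus v$), which is standard but neither proved nor cited in the paper, so you should state and prove or reference it; also your folding sketch should be phrased so that it handles equal consecutive neighborhoods as well as strict drops before concluding ``either complete bipartite or an isolated vertex appears''. Your remarks that Ferrers slices force purity and that connectedness of each slice plus shared ranks gives connectedness of $P$ are fine.
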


\begin{proof}
$(\Leftarrow)$
Let $r = \overline{r}(P)$.
Suppose $\mcF(P)$ does not have a linear resolution.
Let $A$ be a multidegree that appears with a nonzero multigraded Betti number in the free resolution of $\mcF(P)$ but not on the first linear strand.
For $i=1,\ldots,r-1$ let $G_i(A)$ be the bipartite graph on $A_i\cup A_{i+1}$ and let $X_i(A)$ be the simplicial complex whose Stanley-Resiner ideal is the edge ideal of $G_i(A)$.
By Proposition \ref{pro-firststrand}, there exist some $j$ such that $G_j(A)$ is not a full bipartite graph.
By Theorem \ref{thm-towerjoin}, $X(A)$ is homotopy equivalent to $X_1(A)\ast \cdots \ast X_{r-1}(A)$ and it has nontrivial cohomology. By K\"unneth formula $X_j(A)$ also has nonvanishing cohomology and the multidegree $A_j\cup A_{j+1}$ appears in the free resolution of the edge ideal of the bipartite graph $G_j(P)=P_{j,j+1}$ but not on the first linear strand which is a contradiction to our assumption that the edge ideal of all $G_i(P)$ have linear resolutions.

\medskip
$(\Rightarrow)$
Suppose $\mcF(P)$ has a linear resolution. Obviously the poset $P$ must be a pure poset.
Suppose for some $j$, $G_j(P)$ is not a Ferrers graph.
Then it has an induced subgraph consisting of two separate edges $\{\{p_1,q_1\},\{p_2,q_2\}\}$ with $p_1,p_2\in P_j$ and $q_1,q_2\in P_{j+1}$.
We construct a multidegree $A \subseteq P$ of $\mcF(P)$ that does not lie on the first linear strand as follows. Let $A_j = \{p_1,p_2\}$ and $A_{j+1} = \{q_1,q_2\}$.

i. if $p_1$ and $p_2$ have a parent say $p^{j-1}$ in common then let $A_{j-1} =\{p^{j-1}\}$ and ii. if they do not have a parent in common choose a parent $p_1^{j-1}$ for $p_1$ and a parent $p_2^{j-1}$ for $p_2$ and let $A_{j-1} = \{p_1^{j-1},p_2^{j-1}\}$.
Now we do the same for $A_{j-1}$. In the first case, we choose a parent of $p^{j-1}$ say $p^{j-2}$ and we let $A_{j-2} = \{p^{j-2}\}$. For the second case i. if $p_1^{j-1}$ and $p_2^{j-1}$ have a parent $p^{j-2}$ in common we let $A_{j-2} = \{p^{j-2}\}$ and ii. if they do not have a common parent we choose different parents $p^{j-2}_1,p^{j-2}_2$ and let $A_{j-2} = \{p^{j-2}_1,p^{j-2}_2\}$. We continue this until we define $A_1$.
We use a similar procedure with $q_1$ and $q_2$ and define set $A_{j+2},\ldots,A_{r}$ with the difference that instead of choosing parents we choose children.
Finally we end up with a multidegree $A=\cup_{i=1}^r A_i$ such that for all $i=1,\ldots,r$, the bipartite graphs $G_i(A)$ are isomorphic to one of the graphs below.

\begin{center}
\medskip
\begin{tikzpicture}[scale=.75, vertices/.style={draw, fill=black, circle, inner sep=1pt}]
              \node [vertices] (0) at (-.75+0,0){};
              \node [vertices] (2) at (-.75+1.5,0){};
              \node [vertices] (1) at (-.75+0,1.33333){};
              \node [vertices] (3) at (-.75+1.5,1.33333){};
      \foreach \to/\from in {0/1, 2/3}
      \draw [-] (\to)--(\from);
      \end{tikzpicture}
,
 \begin{tikzpicture}[scale=.75, vertices/.style={draw, fill=black, circle, inner sep=1pt}]
              \node [vertices] (0) at (-.75+0,0){};
              \node [vertices] (2) at (-.75+1.5,0){};
              \node [vertices] (1) at (-0+0,1.33333){};
      \foreach \to/\from in {0/1, 2/1}
      \draw [-] (\to)--(\from);
      \end{tikzpicture}
,
 \begin{tikzpicture}[scale=.75, vertices/.style={draw, fill=black, circle, inner sep=1pt}]
              \node [vertices] (0) at (-0+0,0){};
              \node [vertices] (1) at (-.75+0,1.33333){};
              \node [vertices] (2) at (-.75+1.5,1.33333){};
      \foreach \to/\from in {0/1, 0/2}
      \draw [-] (\to)--(\from);
      \end{tikzpicture}
,
 \begin{tikzpicture}[scale=.75, vertices/.style={draw, fill=black, circle, inner sep=1pt}]
              \node [vertices] (0) at (-0+0,0){};
              \node [vertices] (1) at (-0+0,1.33333){};
      \foreach \to/\from in {0/1}
      \draw [-] (\to)--(\from);
      \end{tikzpicture}
\end{center}
The simplicial complexes $X_i(A)$ have nonvanishing cohomology and so does their join.
Therefore we get a multidegree that appears in the linear resolution of $\mcF(P)$ but not on the first linear strand by Proposition \ref{pro-firststrand} which is a contradiction. This completes the proof.
\end{proof}

\begin{definition}
A squarefree monomial ideal is called {\it bi-Cohen-Macaulay} if both the ideal and its Alexander dual $I^A$ are Cohen-Macaulay ideals. Equivalently, a squarefree monomial ideal is bi-Cohen-Macaulay if and only if it is Cohen-Macaulay and has a linear resolution. 
\end{definition}

For finite posets $P$ and $Q$, let $\Hom(P,Q)$ be the set of isotone maps $\phi:P\to Q$.
One can see that $\Hom(P,Q)$ is itself a poset with the partial relation
\[
\phi \leq \psi \text{ if and only if } \forall p\in P, \phi(p)\leq \psi(p).
\]
The following propositions shows that for $r,t\in \NN$, the letterplace ideals $L(r,t)$ are the only bi-Cohen-Macaulay flag ideals.

\begin{proposition}
Let $P$ be a finite poset. The flag ideal $\mcF(P)$ is bi-Cohen-Macaulay if and only if $P$ is isomorphic to $\Hom([r],[t])$ for some $r,t\in \NN$.
In other words, after a relevant labeling of elements of $P$, one has $\mcF(P) = L(r,t)$.
\end{proposition}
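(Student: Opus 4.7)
The plan is to combine Theorem \ref{thm-minres} and Theorem \ref{thm-mainCM} with the Herzog--Hibi characterization of Cohen--Macaulay bipartite graphs (recalled at the start of Section \ref{sec-CM}) to identify $P$ layer by layer with the letterplace poset, and then verify that the labelings on consecutive ranks are globally consistent.

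For the forward direction, I would first extract structural information one slice at a time. Assume $\mcF(P)$ is bi-Cohen-Macaulay. Since $\mcF(P)$ has a linear resolution, Theorem \ref{thm-minres} forces $P$ to be pure of some rank $r$ and each consecutive bipartite graph $P_{i,i+1}$ to be a Ferrers graph. Since $\mcF(P)$ is Cohen-Macaulay, rank selection (Lemma \ref{pro-rankselection}) gives that each $\mcF(P,\{i,i+1\})$ is Cohen-Macaulay, so each $P_{i,i+1}$ is a Cohen-Macaulay bipartite graph, hence by Herzog--Hibi is isomorphic as a graph to the quadratic letterplace ideal $L(2,Q_i)$ for some poset $Q_i$.

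The next step is the observation that $L(2,Q)$ is Ferrers if and only if $Q$ is a chain. Using the ``no two induced disjoint edges'' criterion of Theorem \ref{thm-Ferrerschar}, any pair of incomparable elements $q,q'\in Q$ would yield two disjoint edges $\{(1,q),(2,q)\}$ and $\{(1,q'),(2,q')\}$ whose induced subgraph carries no other edges; this is forbidden. Hence each $Q_i$ is a chain $[t_i]$, so $P_{i,i+1}=L(2,[t_i])$ as a bipartite graph. Pureness of $P$ now yields $|P_i|=|P_{i+1}|$, so all $t_i$ coincide with a single value $t$.

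To get the global structure, I would invoke Theorem \ref{thm-mainCM}: condition (2) supplies a disjoint chain decomposition giving a global labeling $P_j=\{a^j_1,\ldots,a^j_t\}$, and condition (5) guarantees that any cover $a^j_u\le a^{j+1}_v$ satisfies $u\le v$. The key observation is then an edge count: the bipartite graph $L(2,[t])$ has exactly $t(t+1)/2$ edges, which equals the number of pairs $(u,v)\in [t]^2$ with $u\le v$; since condition (5) restricts edges in $P_{j,j+1}$ to such pairs, every such pair must actually be an edge. Taking transitive closure, the partial order on $P$ becomes $a^i_u\le a^j_v \Leftrightarrow i\le j\text{ and }u\le v$, which is exactly the letterplace poset on $[r]\times[t]$; hence $\mcF(P)=L(r,[t])$. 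The converse is immediate: letterplace ideals are Cohen-Macaulay by \cite{Ene-Herzog-Mohammadi-01}, and each slice $L(2,[t])$ is a Ferrers graph, so Theorem \ref{thm-minres} yields a linear resolution. The main obstacle, and where condition (5) is doing the real work, is reconciling the labelings of $P_i$ inherited from the two adjacent slices $P_{i-1,i}$ and $P_{i,i+1}$; the edge-counting argument above is what forces them to be the same canonical letterplace labeling.
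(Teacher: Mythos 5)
Your argument is correct and arrives at the same structural conclusion as the paper (the cover relations of $P$ form the letterplace pattern, hence $\mcF(P)=L(r,t)$), but the key step is executed differently. The paper, after invoking Theorem \ref{thm-mainCM} exactly as you do, works directly with the disjoint chain decomposition of condition (2): for $j<j'$ the edges $\{a^i_j,a^{i+1}_j\}$ and $\{a^i_{j'},a^{i+1}_{j'}\}$ are vertex-disjoint, so by Theorem \ref{thm-Ferrerschar} (no induced pair of disjoint edges) some cross edge exists, and condition (5) forces it to be $\{a^i_j,a^{i+1}_{j'}\}$; this produces every staircase edge at once, with no appeal to Herzog--Hibi on the slices and no counting, so the labeling-compatibility issue you worry about never arises. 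Your route instead uses rank selection plus Herzog--Hibi to write each slice as $L(2,Q_i)$, shows $Q_i$ is a chain via the same no-$2K_2$ criterion, and then reconciles the slice labeling with the global labeling of Theorem \ref{thm-mainCM} by the count $t(t+1)/2$ edges versus $t(t+1)/2$ pairs $u\le v$ allowed by condition (5); that counting argument is valid and is a clean alternative, at the cost of an extra layer (rank selection and the $L(2,Q_i)$ identification). Your converse (Cohen--Macaulayness from \cite{Ene-Herzog-Mohammadi-01} plus linearity of the resolution from Theorem \ref{thm-minres}) also differs from the paper's observation that $L(r,t)$ is the polarization of $\mathfrak{m}^r$; both are fine. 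Two small repairs: the equality $|P_i|=|P_{i+1}|$ comes from the identification of the slice with $L(2,[t_i])$ (i.e.\ from Cohen--Macaulayness), not from purity alone; and your final description of the order should be $a^i_u\le a^j_v$ if and only if either $i<j$ and $u\le v$, or $i=j$ and $u=v$ --- elements of equal rank are incomparable, and it is this poset (the letterplace poset on $[r]\times[t]$) that the transitive closure of your cover relations actually yields.
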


\begin{proof}
Let $R=\kk[x_1,\ldots,x_t]$ be a polynomial ring in $t$ variables and let $\mathfrak{m} = <x_1,\ldots,x_t>$ be the homogeneous maximal ideal of $R$.
The letterplace ideal $L(r,t)$ is the polarization of the $r$-th power of $\mathfrak{m}$. Hence it is bi-Cohen-Macaulay.

Conversely, suppose $\mcF(P)$ is bi-Cohen-Macaulay.
The ideal $\mcF(P)$ is Cohen-Macaulay and has a linear resolution.
Obviously, $P$ must be a pure poset.
Let $r=\overline{r}(P)$ and $t=|P_1|$.
Since it is Cohen-Macaulay its elements has a labeling that satisfies the conditions in Theorem \ref{thm-mainCM}.
Consider the edges $\{a^i_j, a^{i+1}_j\}$ and $\{a^i_{j'},a^{i+1}_{j'}\}$ with $j\leq j'$ of the bipartite graph $P_{i,i+1}$ for $1\leq i\leq r-1$.
By Theorem \ref{thm-Ferrerschar}, these edges can not be disjoint.
Therefore, $P_{i,i+1}$ contains the edge $\{a^i_j,a^{i+1}_{j'}\}$ or the edge $\{a^i_{j'},a^{i+1}_j\}$ or both.
Since $\mcF(P)$ is Cohen-Macaulay, it can only have $\{a^i_j,a^{i+1}_{j'}\}$.
This implies that $P_{i,i+1} \cong \Hom([2], [t])$ and also $P\cong \Hom([r],[t])$.
\end{proof}

\bibliographystyle{alpha}
\bibliography{MyBib}
\end{document}